\DeclareMathAlphabet{\pazocal}{OMS}{zplm}{m}{n}
\theoremstyle{plain}
\newtheorem{theorem}{Theorem}[section]
\newtheorem{proposition}[theorem]{Proposition}
\newtheorem{lemma}[theorem]{Lemma}
\newtheorem{corollary}[theorem]{Corollary}
\theoremstyle{remark}
\newtheorem{remark}[theorem]{Remark}
\numberwithin{equation}{section}
\newcommand{\tcr}[1]{\textcolor{red}{#1}}
\newcommand{\tcb}[1]{\textcolor{blue}{#1}}
\newcommand{\des}{{\sf des}} 
\newcommand{\suchthat}{\;|\;}
\definecolor{darkbrown}{rgb}{0.6, 0.3, 0.1}
\newcommand{\x}{\mathbf{x}}
\newcommand{\p}[1]{\pc{#1}}
\newcommand{\flatten}[1]{\mathsf{flat}(#1)}
\newcommand{\precdot}{\prec\mathrel{\mkern-5mu}\mathrel{\cdot}}
\newcommand{\ombfo}{\omega_{\mathbf{o}}}
\newcommand{\bfo}{\mathbf{o}}
\newcommand{\slide}[1]{\mathfrak{F}_{#1}} 
\newcommand{\mc}[1]{\mathcal{#1}}
\newcommand{\pc}[1]{\pazocal{#1}}
\newcommand{\vbar}{\hspace{0.2mm}|\hspace{0.2mm}}
\newcommand{\backslide}[1]{\overleftarrow{\mathfrak{F}}_{#1}} 
\begin{document}

\title[Slide positivity of chromatic nonsymmetric polynomials]{Chromatic nonsymmetric polynomials of Dyck graphs are slide-positive}
\author{Vasu Tewari}
\address{Department of Mathematics, University of Pennsylvania, Philadelphia, PA 19104, USA}
\email{\href{mailto:vvtewari@math.upenn.edu}{vvtewari@math.upenn.edu}}
\author{Andrew Timothy Wilson}
\address{Department of Mathematics, Portland State University, Portland, OR 97201, USA}
\email{\href{mailto:atwilson0328@gmail.com}{atwilson0328@gmail.com}}
\author{Philip B. Zhang}
\address{College of Mathematical Science, Tianjin Normal University, Tianjin 300387, China}
\email{\href{mailto:zhang@tjnu.edu.cn}{zhang@tjnu.edu.cn}}

\subjclass[2010]{Primary 05E05; Secondary 05A05, 05C15}
\keywords{}

\begin{abstract}
Motivated by the study of Macdonald polynomials, J.~Haglund and A.~Wilson introduced a nonsymmetric polynomial analogue of the chromatic quasisymmetric function called the \emph{chromatic nonsymmetric polynomial} of a Dyck graph.
We give a positive expansion for this polynomial in the basis of fundamental slide polynomials using recent work of Assaf-Bergeron on flagged $(P,\rho)$-partitions.
We then derive the known expansion for the chromatic quasisymmetric function of Dyck graphs in terms of Gessel's fundamental basis by taking a backstable limit of our expansion.
\end{abstract}

\maketitle
\section{Introduction}\label{sec:introduction}
The chromatic polynomial was introduced by Birkhoff \cite{Birkhoff} in 1912 for planar graphs in an attempt to establish the four color theorem, and was generalized to arbitrary graphs by Whitney \cite{Whitney}.
This polynomial and its various generalizations have proven to be fertile grounds for a host of interesting mathematics ever since.
Stanley \cite{Stanley-chromatic} introduced a symmetric function generalization of the chromatic polynomial called the chromatic symmetric function.
This function was studied from the perspective of $(P,\omega)$-partitions and quasisymmetric functions by Chow \cite{Chow}.
By introducing another parameter $t$, this perspective gives a more refined function called the chromatic quasisymmetric function, which was introduced by Shareshian-Wachs \cite{ShWa}.
They established that the chromatic quasisymmetric functions of incomparability graphs of posets  expand in terms of Gessel's fundamental quasisymmetric functions with coefficients in $\mathbb{N}[t]$.
For incomporability graphs of natural unit interval orders, the chromatic quasisymmetric function is in fact Schur positive. 
See \cite{Alexandersson-Panova, Athanasiadis, Brosnan-Chow,MGP} for some remarkable aspects of these symmetric functions and related topics.
Given the recent interest in polynomial analogues of combinatorially-defined quasisymmetric and symmetric functions
\cite{Pechenik-Searles}, it is natural to investigate chromatic quasisymmetric functions.
For the class of graphs mentioned earlier, a nonsymmetric polynomial analogue was proposed by Haglund-Wilson \cite{Haglund-Wilson}, and this is the chief object of our study.

These graphs are conveniently encoded via Dyck paths, and Novelli-Thibon \cite{Novelli-Thibon}, in their study of the attached chromatic quasisymmetric functions from the viewpoint of Hopf algebras, refer to them as \emph{Dyck graphs}. To allow for a nonsymmetric polynomial analogue, Haglund-Wilson \cite{Haglund-Wilson} attached Dyck graphs to partial Dyck paths and used squares that lie between $D$ and the line $y=x$ to impose restrictions on the colors allowed at each vertex.
Thus, instead of having a common set of colors for all vertices, we use $D$ to obtain different restrictions on colors for different vertices.
Taking the generating function of monomials attached to proper colorings with these restrictions  along with appropriate $t$-weights gives us the chromatic nonsymmetric polynomial $\p X_{D}(\x_r;t)$.

Our central result is that $\p X_D(\x_r;t)$ expands in terms of fundamental slide polynomials with coefficients in $\mathbb{N}[t]$.
The fundamental slides \cite{Assaf-Searles}  are polynomial analogues of fundamental quasisymmetric functions.
By extending our partial Dyck path to an infinite path by prepending infinitely many east steps, we can obtain a formal power series $\overleftarrow{\p X}_D(\x;t)$ as the stable limit.
We then obtain as a corollary of our main result the expansion of the chromatic quasisymmetric function of Dyck graphs in terms of  fundamental quasisymmetric functions by way of $\overleftarrow{\p X}_D(\x;t)$.
	
\smallskip

\textbf{Outline of the article: }
After setting up the necessary background,  we introduce chromatic nonsymmetric polynomials at the end of Section~\ref{subsec:dyck_paths_and_graphs}.
In Sections~\ref{subsec:p_partitions} and \ref{subsec:lin_orders} we define restricted $(P,\omega)$-partitions following Assaf-Bergeron \cite{Assaf-Bergeron} and introduce the polynomial analogue of the quasisymmetric function attached to usual $(P,\omega)$-partitions, focusing in particular on labeled linear orders.
In Section~\ref{sec:main_result}, we provide a positive expansion for the chromatic nonsymmetric polynomial in terms of fundamental slide polynomials in Theorem~\ref{thm:main_theorem}.
We then proceed to study the backstable limit, drawing inspiration from work of Lam-Lee-Shimozono \cite{Lam-Lee-Shimozono}, and derive a known expansion of the chromatic quasisymmetric function for Dyck graphs  in terms fundamental quasisymmetric functions as a corollary of our main result.

\section{Background}\label{sec:background}

For $n$ a nonnegative integer, we denote  the set $\{1,\dots, n\}$ by $[n]$. Throughout, we use $<$ to denote the natural order on integers.
Given a positive integer $n$, we denote by $\x_n$ the commutative alphabet $\{x_1,\dots,x_n\}$.
For notions concerning symmetric/quasisymmetric functions and standard combinatorial constructions that are not defined here, we refer the reader to \cite{Macdonald,Sagan,Stanley-ec2}.

\subsection{Graphs, colorings}\label{subsec:colorings_etc}

We consider finite simple graphs $G=(V,E)$ where $V$ is an ordered set of vertices. 
We identify $V$ with $[n]$ with the order being the natural order on the integers.
The set of edges $E$  is a subset of $\{\{i,j\}\suchthat 1\leq i<j\leq n\}$.
A \emph{coloring} of $G$ is a map $f : [n] \to\mathbb{Z}$. 
For the most part we will restrict to maps to $\mathbb{Z}_{>0}$, though we allow for `negative' colors  in Subsection~\ref{subsec:backstable}.
A coloring $f$ of $G$ is \emph{proper}  if $f(i)\neq f(j)$ for all edges $\{i,j\}\in E$. 
We call $\{i,j\}\in E$ a \emph{descent} of $f$ if $i<j$ and $f(i)>f(j)$.
We denote the number of descents in $f$ by $\des_G(f)$.
The \emph{chromatic quasisymmetric function} introduced by Shareshian and Wachs \cite[Definition 1.2]{ShWa} is defined as follows:
\begin{align}\label{eqn:chromatic_quasisymmetric}
X_{G}(\x_{\geq 1};t)=\sum_{\text{proper colorings }f:[n]\to \mathbb{Z}_{> 0}}t^{\des_G(f)}x_{f(1)}\cdots x_{f(n)},
\end{align}
where $n$ is the cardinality of $V$ and $\x_{\geq 1}$ denotes the alphabet of commuting indeterminates $\{x_i\suchthat i\in \mathbb{Z}_{>0}\}$.
We remark here that the definition in ~\eqref{eqn:chromatic_quasisymmetric} differs from that in \cite{ShWa} 
up to twisting by an involution defined on the ring of quasisymmetric functions. Since this is a minor point, we abuse notation and refer to the function in ~\eqref{eqn:chromatic_quasisymmetric} as the chromatic quasisymmetric function of $G$.

\subsection{Partial Dyck paths and associated graphs}\label{subsec:dyck_paths_and_graphs}

Let $n$ and $r$ be nonnegative integers. 
We define $P_{n,r}$ to be set of lattice paths that begin at $(0,r)$, end at $(n+r,n+r)$, take unit north and east steps, and stay weakly above the line $y=x$.
We refer to elements of $P_{n,r}$ as \emph{partial Dyck paths}.
We next discuss  a procedure that assigns to each $D\in P_{n,r}$ a graph $G_D=([n],E_D)$ and a function $\rho_D:[n]\to \mathbb{Z}_{\geq 0}$.

Given $D\in P_{n,r}$, assign the integers $1,\dots,n+r$ to the unit squares along the diagonal $y=x$ going from $(0,0)$ to $(n+r,n+r)$, and the integers $0,-1,\dots$ in the opposite direction.
For $p < q$, let ${\sf s}(p,q)$ refer the unique square in the plane directly north of the square labeled $p$ and directly west of the square labeled $q$.
We define $G_D$ by explicitly describing $E_D$ \textemdash{} $\{i,j\}\in E_D$ if and only if $i<j$ and ${\sf s}(i+r,j+r)$ lies below $D$.
Following \cite{Novelli-Thibon}, we call $G_D$ a \emph{Dyck graph}.
Such graphs are characterized by the property that $\{i,j\}\in E$ and $i<j$ implies $\{i',j'\}\in E$  for all $i\leq i'<j'\leq j$.
The \emph{restriction map} $\rho_D$ is defined as follows: for every $1\leq i\leq n$, find the largest $j\leq r$ such that ${\sf s}(j,i+r)$ lies above $D$ and set $\rho_D(i)=j$.
Note in particular that $0\leq \rho_D(1)\leq \rho_D(2)\leq \cdots \leq \rho_D(n)\leq r$. 

Figure~\ref{fig:partial_dyck} shows a partial Dyck path $D$ in $P_{6,5}$.
Thus the vertex set of $G_D$ is $[6]$. 
As the green square ${\sf s}(6,8)$ lies below $D$, we infer that $(1,3)\in E_D$. Arguing in this manner, one can compute $E_D$.
As the square ${\sf s}(4,7)$ does not lie below $D$ while ${\sf s}(5,7)$ does, we infer that $\rho_D(2)=4$.
Figure~\ref{fig:g_d} shows $G_D$ as well as the restriction map $\rho_D$. The latter is written with inequalities the meaning of which we now clarify.

Given $D\in P_{n,r}$, Haglund-Wilson \cite[Section 5.6]{Haglund-Wilson} introduce a polynomial $\p X_D(\x_r;t)$ by mimicking the definition of the chromatic quasisymmetric function.
We have
\begin{align}\label{eqn:chromatic_nonsymmetric_polynomial}
\p X_{D}(\x_r;t)=\sum_{\substack{\text{proper colorings }f:[n]\to \mathbb{Z}_{> 0}\\f(i)\leq \rho_D(i)}}t^{\des_G(f)}x_{f(1)}\cdots x_{f(n)}.
\end{align}
The polynomial $\p X_{D}(\x_r;t)$ is the \emph{chromatic nonsymmetric polynomial} of $G_D$.
See Section~\ref{sec:final} for more context on its definition.

\medskip

\begin{minipage}{\linewidth}
\begin{minipage}{0.5\linewidth}
\centering
\begin{tikzpicture}[scale=.4]
    \draw[gray,very thin] (0,0) grid (13,13);
    \draw[line width=0.25mm, black, <->] (0,1)--(13,1);
    \draw[line width=0.25mm, black, <->] (1,0)--(1,13);
    
    \draw[rounded corners,fill=green,opacity=0.3] (6, 8) rectangle (7,9) {}; 

    \draw[green, line width=0.3mm] (6.5,7)--(6.5,8.5)--(8,8.5);
    \draw[rounded corners,fill=blue,opacity=0.3] (0, 0) rectangle (1, 1) {}; 
    \draw[rounded corners,fill=blue,opacity=0.3] (1, 1) rectangle (2, 2) {}; 
    \draw[rounded corners,fill=blue,opacity=0.3] (2, 2) rectangle (3, 3) {}; 
    \draw[rounded corners,fill=blue,opacity=0.3] (3, 3) rectangle (4, 4) {}; 
    \draw[rounded corners,fill=blue,opacity=0.3] (4, 4) rectangle (5, 5) {}; 
    \draw[rounded corners,fill=blue,opacity=0.3] (5, 5) rectangle (6, 6) {}; 
    
    \draw[rounded corners,fill=red,opacity=0.3] (6, 6) rectangle (7, 7) {}; 
    \draw[rounded corners,fill=red,opacity=0.3] (7, 7) rectangle (8, 8) {}; 
    \draw[rounded corners,fill=red,opacity=0.3] (8, 8) rectangle (9, 9) {}; 
    \draw[rounded corners,fill=red,opacity=0.3] (9, 9) rectangle (10, 10) {}; 
    \draw[rounded corners,fill=red,opacity=0.3] (10, 10) rectangle (11, 11) {}; 
    \draw[rounded corners,fill=red,opacity=0.3] (11, 11) rectangle (12, 12) {}; 
    
    \draw[rounded corners,fill=orange,opacity=0.3] (4, 7) rectangle (5,8) {}; 
    \draw[orange, line width=0.3mm] (4.5,5)--(4.5,7.5)--(7,7.5);
       
    \node[draw, circle,minimum size=3pt,inner sep=0pt, outer sep=0pt, fill=blue] at (1, 6)   (start) {};
    \node[draw, circle,minimum size=3pt,inner sep=0pt, outer sep=0pt, fill=blue] at (12, 12)   (end) {}; 
    \node[] at (6.5, 6.5)   (v1) {$6$}; 
  \node[] at (7.5, 7.5)   (v2) {$7$}; 
  \node[] at (8.5, 8.5)   (v3) {$8$}; 
  \node[] at (9.5, 9.5)   (v4) {$9$}; 
  \node[] at (10.5, 10.5)   (v5) {$10$}; 
  \node[] at (11.5, 11.5)   (v6) {$11$}; 
     
   \node[] at (0.5, 0.5)   (c0) {$0$}; 
  \node[] at (1.5, 1.5)   (c1) {$1$}; 
  \node[] at (2.5, 2.5)   (c2) {$2$}; 
  \node[] at (3.5, 3.5)   (c3) {$3$}; 
  \node[] at (4.5, 4.5)   (c4) {$4$}; 
  \node[] at (5.5, 5.5)   (c5) {$5$}; 
  
    \draw[blue, line width=0.3mm] (1,6)--(2,6);
    \draw[blue, line width=0.3mm] (2,6)--(2,7);
    \draw[blue, line width=0.3mm] (2,7)--(3,7);
    \draw[blue, line width=0.3mm] (3,7)--(4,7);
    \draw[blue, line width=0.3mm] (4,7)--(5,7);
    \draw[blue, line width=0.3mm] (5,7)--(5,8);
    \draw[blue, line width=0.3mm] (5,8)--(6,8);
    \draw[blue, line width=0.3mm] (6,8)--(6,9);
    \draw[blue, line width=0.3mm] (6,9)--(7,9);
    \draw[blue, line width=0.3mm] (7,9)--(8,9);
    \draw[blue, line width=0.3mm] (8,9)--(8,10);
    \draw[blue, line width=0.3mm] (8,10)--(8,11);
    \draw[blue, line width=0.3mm] (8,11)--(9,11);
    \draw[blue, line width=0.3mm] (9,11)--(10,11);
    \draw[blue, line width=0.3mm] (10,11)--(10,12);
    \draw[blue, line width=0.3mm] (10,12)--(11,12);
    \draw[blue, line width=0.3mm] (11,12)--(12,12);
  \end{tikzpicture}
\captionof{figure}{A partial Dyck path $D$.}
\label{fig:partial_dyck}
\end{minipage}
\begin{minipage}{0.45\linewidth}
\centering
  \begin{tikzpicture}[scale=1]
    \node[draw, circle,minimum size=4pt,inner sep=0pt, outer sep=0pt, fill=black] at (0, 0) [label=right:1]   (v1) {}; 
    \node[draw, circle,minimum size=4pt,inner sep=0pt, outer sep=0pt, fill=black] at (1, 0) [label=right:2]  (v2) {}; 
    \node[draw, circle,minimum size=4pt,inner sep=0pt, outer sep=0pt, fill=black] at (2,0)  [label=right:3] (v3) {}; 
    \node[draw, circle,minimum size=4pt,inner sep=0pt, outer sep=0pt, fill=black] at (3, 0) [label=right:4]  (v4) {}; 
    \node[draw, circle,minimum size=4pt,inner sep=0pt, outer sep=0pt, fill=black] at (4, 0)[label=right:5]   (v5) {}; 
    \node[draw, circle,minimum size=4pt,inner sep=0pt, outer sep=0pt, fill=black] at (5, 0)  [label= right:6] (v6) {}; 
    
    \node[] at (0,-0.5)    (ineq1) {\textcolor{red}{\tiny{$\leq\! 1$}}}; 
    \node[] at (1, -0.5)   (ineq2) {\textcolor{red}{\tiny{$\leq\!4$}}}; 
    \node[] at (2,-0.5)  (ineq3) {\textcolor{red}{\tiny{$\leq\! 5$}}}; 
    \node[] at (3, -0.5)  (ineq4) {\textcolor{red}{\tiny{$\leq\! 5$}}}; 
    \node[] at (4, -0.5)  (ineq5) {\textcolor{red}{\tiny{$\leq\! 5$}}}; 
    \node[] at (5, -0.5)   (ineq6) {\textcolor{red}{\tiny{$\leq\! 5$}}}; 
    
    \draw [black,thick]   (v1) to[out=60,in=120] (v2);
    \draw [black,thick]   (v1) to[out=90,in=90] (v3);
    \draw [black,thick]   (v2) to[out=60,in=120] (v3);
    \draw [black,thick]   (v3) to[out=60,in=120] (v4);
    \draw [black,thick]   (v3) to[out=90,in=90] (v5);
    \draw [black,thick]   (v4) to[out=60,in=120] (v5);
    \draw [black,thick]   (v5) to[out=60,in=120] (v6);
    
  \end{tikzpicture}
\captionof{figure}{The graph $G_D$.}
\label{fig:g_d}
\end{minipage}
\end{minipage}

\subsection{Slide polynomials}\label{subsec:slides}
We recall some notions before defining slide polynomials, a polynomial analogue of the fundamental quasisymmetric functions introduced in \cite{Assaf-Searles}.
Our treatment is slightly nonstandard, but it will allow us to deal with stable limits in a uniform manner.

Given a sequence of nonnegative integers ${\bf a}=(a_i)_{i\in \mathbb{Z}}$, we define the \emph{support} of ${\bf a}$, denoted by $\mathrm{supp}({\bf a})$ to be the set $\{i\in \mathbb{Z}\suchthat a_i>0\}$.
If $\mathrm{supp}({\bf a})$ is finite, then we call ${\bf a}$  a \emph{weak composition}.
We denote the set of weak compositions by $S_{\mathbb{Z}}$ because we may interpret ${\bf a}$ as the code\footnote{Recall that the code ${\bf a}=(a_i)_{i\in \mathbb{Z}}$ of a permutation $w=\cdots w_{-1}w_0w_1\cdots$ of $\mathbb{Z}$ is defined by setting $a_i=|\{j>i\suchthat w_i>w_j \}|$.} of a permutation of $\mathbb{Z}$ that fixes all but finitely many integers. 
The \emph{weight} of any sequence (finite or infinite) is the sum of its entries.
There is a unique weak composition of weight $0$: the sequence consisting solely of $0$s.
Given an integer $r$, the set of all weak compositions ${\bf a}$ that satisfy $a_i=0$ for all $i>r$ is denoted by $S_{\mathbb{Z}}^r$.
The (potentially empty) sequence obtained by omitting all zeros from a weak composition is called a \emph{strong  composition}.
We denote the strong composition underlying ${\bf a}$ by $\flatten{{\bf a}}$. The unique strong composition of weight 0 is denoted by $\varnothing$.
\textbf{From this point onward, we reserve the term composition for weak compositions}.

For two strong compositions $\alpha$ and $\beta$ of the same weight, we say that  $\alpha$ refines $\beta$ if we can 
iteratively combine adjacent parts of $\alpha$ to obtain $\beta$.
For instance, $(2,1,1,2,1)$ refines $(3,4)$. 
Using the notion of refinement we define a total order $\leq_{\mathrm{c}}$ on  compositions of the same weight as follows: ${\bf b}\leq_{\mathrm{c}} {\bf a}$ if $\flatten{{\bf b}}$ refines $\flatten{{\bf a}}$ and additionally, ${\bf b}$ is 
smaller than ${\bf a}$ in reverse lexicographic order.
We denote the set of all ${\bf b} \leq_{\mathrm{c}} {\bf a}$ by $\p C_{\leq {\bf a}}$.
Given a positive integer $r$, a distinguished subset of $\p C_{\leq {\bf a}}$, denoted by $\p C_{\leq {\bf a}}^{(r)}$, comprises those ${\bf b}$ that satisfy $\mathrm{supp}({\bf b})\subseteq [r]$.
Note that in contrast to  $\p C_{\leq {\bf a}}$ which is infinite except when ${\bf a}$ has weight zero, the set $\p C_{\leq {\bf a}}^{(r)}$ is finite.

\begin{remark}
For the sake of clarity, when dealing with explicit instances of compositions ${\bf a}=(\dots,a_{-1},a_0,a_1,\dots)$ we suppress leading and trailing zeros and furthermore place a bar between $a_0$ and $a_1$ to clearly show where the positively indexed terms in the sequence begin.
If $a_i=0$ for all $i\leq 0$, we omit the bar and write ${\bf a}$ as a finite sequence, which is the more conventional form.
\end{remark}

 Let ${\bf a}=(0,2,0,2)\in S_{\mathbb{Z}}^4$. Then $\flatten{{\bf a}}=(2,2)$.
In ~\eqref{eqn:all_elements}, we list all elements of $\p C_{\leq {\bf a}}^{(4)}$ after omitting brackets and commas for brevity.
\begin{align}\label{eqn:all_elements}
\{0202, 2002, 2020,2200,1102,1120,1111,0211,2011,2101,2110\}.
\end{align}
On the other hand, if ${\bf a}=(1\vbar 0,2,0,1)$, then $\p C_{\leq {\bf a}}^{(4)}$ is clearly empty as compositions ${\bf b}\leq_{\mathrm{c}}  (1\vbar 0,2,0,1)$ cannot satisfy $\mathrm{supp}({\bf b})\subseteq [4]$.

Given a positive integer $r$ and a composition ${\bf a}\in S_{\mathbb{Z}}$, the \emph{fundamental slide polynomial} $\slide{{\bf a}}(\x_r)$ \cite[Definition 3.6]{Assaf-Searles} is defined as
\begin{align}\label{eqn:slides}
\slide{{\bf a}}(\x_r)\coloneqq\sum_{{\bf b}\in \p C_{\leq {\bf a}}^{(r)}}x_1^{b_1}\cdots x_{r}^{b_r}.
\end{align}
Henceforth we refer to fundamental slide polynomials as \emph{slide polynomials}.
The expansion of the slide polynomial indexed by ${\bf a} = (0,2,0,1)$ is 
\begin{align}\label{eqn:slide_ex}
\slide{0201}(\x_4)=x_2^2x_4+x_1^2x_4+x_1^2x_3+x_1^2x_2+x_1x_2x_4+x_1x_2x_3.
\end{align}
If ${\bf a}=(1\vbar 0,2,1,0)$, then $\slide{{\bf a}}(\x_4)=0$.

A simple triangularity argument \cite[Theorem 3.9]{Assaf-Searles} implies that the set of slide polynomials $\slide{{\bf a}}(\x_r)$ as ${\bf a}$ ranges over compositions satisfying $\mathrm{supp}({\bf a})\subseteq [r]$ is a basis for the polynomial ring $\mathbb{Q}[x_1,\dots,x_r]$.
We refer the reader to \cite{Assaf-Searles} for other aspects of slide polynomials, in particular the relation to Schubert polynomials.

\subsection{Restricted $(P,\omega)$-partitions}\label{subsec:p_partitions}

All our posets  are finite. 
Given a poset $P$, we always assume that its ground set is identified with $[|P|]$.
We depict $P$ using its Hasse diagram. 
We use $\preceq_P$ to denote the order relation on $P$, and cover relations are denoted by $\precdot_P$.
A \emph{labeling} of $P$ is a map $\omega:[n]\to [n]$ where $n\coloneqq |P|$.
We refer to the ordered pair $(P,\omega)$ as a \emph{labeled poset}.
A $(P,\omega)$-partition is a map $f:[n]\to \mathbb{Z}_{>0}$ satisfying the conditions that
\begin{enumerate}
\item if $i\precdot_P j$ and $\omega(i)<\omega(j)$, then $f(i)\leq f(j)$.
\item if $i\precdot_P j$ and $\omega(i)>\omega(j)$, then $f(i)< f(j)$.
\end{enumerate}
Given a map $\rho:[n]\to \mathbb{Z}$, we refer to the datum $(P,\omega,\rho)$ as a \emph{$\rho$-restricted labeled poset}. 
Additionally, a $(P,\omega)$-partition $f$ that satisfies $f(i)\leq \rho(i)$ for all $i\in [n]$ is said to be a \emph{$\rho$-restricted $(P,\omega)$-partition} \cite{Assaf-Bergeron}.
We denote the set of $\rho$-restricted $(P,\omega)$-partitions by $\mathcal{A}(P,\omega,\rho)$.

\begin{remark}
Assaf-Bergeron \cite{Assaf-Bergeron} work under the assumption that the ground set of $P$ is identified with $[|P|]$ via the labeling $\omega$.
Since we will 
talk about graphs and posets arising from acyclic orientations thereon, the ground set of our posets will be vertex set of our graph (already identified with $[n]$ for some $n$); the labelings $\omega$ we employ might be different. 
\textbf{Throughout this article, in our Hasse diagrams, the numbers within nodes correspond to the labeling $\omega$}.
\end{remark}

Assaf-Bergeron \cite[Section 3]{Assaf-Bergeron} associate a polynomial $\p {F}_{(P,\omega,\rho)}$ with the triple $(P,\omega,\rho)$, mimicking the classical theory of quasisymmetric functions attached to $(P,\omega)$-partitions.
More specifically, define 
$\p {F}_{(P,\omega,\rho)}$ as
\begin{align}\label{eqn:p_partition_gf}
\p {F}_{(P,\omega,\rho)}\coloneqq\sum_{f\in \mathcal{A}(P,\omega,\rho)} x_{f(1)}\cdots x_{f(n)}.
\end{align}
It is possible that $\mathcal{A}(P,\omega,\rho)$ is empty, for instance if $\rho$ takes a value in $\mathbb{Z}_{\leq 0}$. In such cases $\p {F}_{(P,\omega,\rho)}$  equals $0$.
For the triple $(P,\omega,\rho)$ on the left in Figure~\ref{fig:ex_computation}, assume that the ground set of $P$ is identified with $[3]$ via $\omega$. 
One can check that
\begin{align}
\p {F}_{(P,\omega,\rho)}&=(\tcr{x_1x_2^2+x_1x_2x_3})+(\tcb{x_2^2x_3+x_1x_2x_3+x_1^2x_3+x_1^2x_2})\nonumber\\&=
\tcr{\slide{120}(\x_3)+\slide{111}(\x_3)}+\tcb{\slide{021}(\x_3)}.
\end{align}
Observe that summands within the first pair of parentheses are from the linear order in the middle in Figure~\ref{fig:ex_computation}, whereas those within the second pair  are from the linear order on the right.

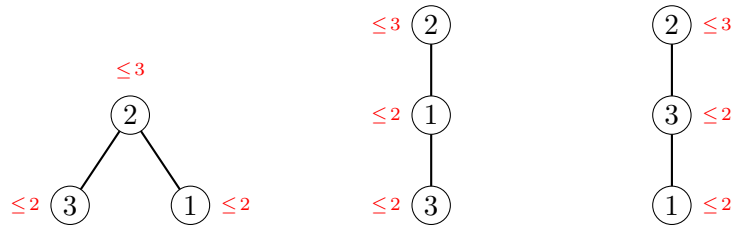
\begin{figure}[h]
  \begin{tikzpicture}[scale=0.8]    
    \node[draw, circle,minimum size=10pt,inner sep=2pt, outer sep=0pt] at (-2, 0) (v1) {$3$}; 
    \node[draw, circle,minimum size=10pt,inner sep=2pt, outer sep=0pt] at (-1, 1.5) (v2) {$2$}; 
    \node[draw, circle,minimum size=10pt,inner sep=2pt, outer sep=0pt] at (0,0)   (v3) {$1$};   
    \node[] at (-2.75,0)    (ineq1) {\textcolor{red}{\tiny{$\leq\! 2$}}}; 
    \node[] at (-1, 2.25)   (ineq2) {\textcolor{red}{\tiny{$\leq\!3$}}}; 
    \node[] at (0.75,0)  (ineq3) {\textcolor{red}{\tiny{$\leq\! 2$}}};   
    \draw [black,thick]   (v1) to (v2);
    \draw [black,thick]   (v2) to (v3);
    
     \node[draw, circle,minimum size=10pt,inner sep=2pt, outer sep=0pt] at (4, 0) (u1) {$3$}; 
    \node[draw, circle,minimum size=10pt,inner sep=2pt, outer sep=0pt] at (4, 1.5) (u2) {$1$}; 
    \node[draw, circle,minimum size=10pt,inner sep=2pt, outer sep=0pt] at (4,3)   (u3) {$2$};   

 \node[draw, circle,minimum size=10pt,inner sep=2pt, outer sep=0pt] at (8, 0) (w1) {$1$}; 
    \node[draw, circle,minimum size=10pt,inner sep=2pt, outer sep=0pt] at (8, 1.5) (w2) {$3$}; 
    \node[draw, circle,minimum size=10pt,inner sep=2pt, outer sep=0pt] at (8,3)   (w3) {$2$};   
   
    \node[] at (3.25,0)    (ineq1) {\textcolor{red}{\tiny{$\leq\! 2$}}}; 
    \node[] at (3.25, 1.5)   (ineq2) {\textcolor{red}{\tiny{$\leq\!2$}}}; 
    \node[] at (3.25,3)  (ineq3) {\textcolor{red}{\tiny{$\leq\! 3$}}};   

\node[] at (8.75,0)    (ineq4) {\textcolor{red}{\tiny{$\leq\! 2$}}}; 
    \node[] at (8.75, 1.5)   (ineq5) {\textcolor{red}{\tiny{$\leq\!2$}}}; 
    \node[] at (8.75,3)  (ineq6) {\textcolor{red}{\tiny{$\leq\! 3$}}}; 

    \draw [black,thick]   (u1) to (u2);
    \draw [black,thick]   (u2) to (u3);

    \draw [black,thick]   (w1) to (w2);
    \draw [black,thick]   (w2) to (w3);

  \end{tikzpicture}
  \caption{A $\rho$-restricted labeled poset and its two linear extensions.}
  \label{fig:ex_computation}
\end{figure}
 
It is worth remarking that even though in our earlier example
$\p F_{(P,\omega,\rho)}$ is slide-positive, this is not true in general.
See \cite[Example~3.12]{Assaf-Bergeron} for a revealing example.
We are especially interested in the fact that the rightmost linear order in  Figure~\ref{fig:ex_computation} contributes a \emph{single} slide polynomial.
To explore this aspect further, we need to introduce more notions attached to $\rho$-restricted linear orders.

\subsection{Linear order with restrictions}\label{subsec:lin_orders}

Consider a triple $(\p L,\omega,\rho)$ where $\p L$ is a linear order.
Observe that in view of $\omega$, some inequalities imposed by $\rho$ might be redundant.
Figure~\ref{fig:redundant_L}, on the left, depicts a linear order $\p L$ given as $1\precdot_{\p L}2 \precdot_{\p L}3\precdot_{\p L}4 \precdot_{\p L}5$. 
The labeling $\omega$ going from the minimal element in $\p L$ to the maximal gives the permutation $23154$ in one-line notation.
The restriction $\rho$ is defined by $\rho(1)=1$, $\rho(2)=4$, $\rho(3)=5$, $\rho(4)=6$, and $\rho(5)=4$.
Since $\omega(5)=4<\omega(4)=5$, we infer that a $\rho$-restricted $(\p L,\omega)$-partition $f$ must satisfy $f(4)<f(5)$ in addition to $f(4)\leq \rho(4)=6$ and $f(5)\leq \rho(5)=4$.
Clearly, the restriction $\rho(4)=6$ can be replaced with the tighter version $\rho(4)=3$ without altering the set of $\rho$-restricted $(\p L,\omega)$-partitions.

In general, by replacing each inequality imposed by $\rho$ with the tightest one, we obtain a new restriction map $\bar{\rho}^{\p L}$ with the key property that 
$\mc{A}(\p L,\omega,\rho)=\mc{A}(\p L,\omega,\bar{\rho}^{\p L})$.
Observe that $\bar{\rho}^{\p L}$ does depend on $\omega$, but we suppress this dependence and hope that no confusion results.
We next formalize this procedure of finding $\bar{\rho}^{\p L}$. 
Given a positive integer $n$ and a permutation $\pi\in S_n$, let $\p L$ be the linear order $\pi(1)\precdot_{\p L}\cdots \precdot_{\p L} \pi(n)$ endowed with labeling $\omega$ and restriction $\rho$.
We define $\bar{\rho}^{\p L}$ recursively top-down starting from the maximum element of $\p L$.
More precisely, for $i$ from $n$ down to $1$, set 
\begin{align}\label{eqn:barrho}
\bar{\rho}^{\p L}(\pi(i))\coloneqq
\left \lbrace
\begin{array}{ll}
\rho(\pi(n)) & i=n\\
\min\{\bar{\rho}^{\p L}(\pi(i+1)),\rho(\pi(i))\} & \omega\circ\pi(i)< \omega\circ\pi(i+1)\\
\min\{\bar{\rho}^{\p L}(\pi(i+1))-1, \rho(\pi(i))\}
& \omega\circ\pi(i)> \omega\circ\pi(i+1).
\end{array}
\right.
\end{align}
Note in particular  that $\bar{\rho}^{\p L}$ depends on the descent set of the permutation $\omega\circ \pi$, that is, the permutation obtained by reading the labels from the minimal element to the maximal element.
For the linear order from Figure~\ref{fig:redundant_L} encountered earlier, $\pi$ is the identity permutation, and the permutation  $\omega\circ \pi$ is $23154$.
The reader can check that $\bar{\rho}^{\p L}$ is exactly as depicted in the linear order in the middle in Figure~\ref{fig:redundant_L}.

\begin{remark}
Assaf-Bergeron \cite[Definition 3.2]{Assaf-Bergeron} discuss the procedure of find the `tightest' restriction map more generally for posets.
Their definition is therefore a bit more involved, but in the case of linear orders one obtains the description in \eqref{eqn:barrho}.
\end{remark}

\begin{figure}[h]
  \begin{tikzpicture}[scale=0.7]    
    \node[draw, circle,minimum size=10pt,inner sep=2pt, outer sep=0pt] at (0, 0) (v1) {$2$}; 
    \node[draw, circle,minimum size=10pt,inner sep=2pt, outer sep=0pt] at (0, 1.5) (v2) {$3$}; 
    \node[draw, circle,minimum size=10pt,inner sep=2pt, outer sep=0pt] at (0,3)   (v3) {$1$}; 
    \node[draw, circle,minimum size=10pt,inner sep=2pt, outer sep=0pt] at (0, 4.5)  (v4) {$5$}; 
    \node[draw, circle,minimum size=10pt,inner sep=2pt, outer sep=0pt] at (0, 6.0)  (v5) {$4$}; 
    
    \node[] at (-1,0)    (ineq1) {\textcolor{red}{\tiny{$\leq\! 1$}}}; 
    \node[] at (-1, 1.5)   (ineq2) {\textcolor{red}{\tiny{$\leq\!4$}}}; 
    \node[] at (-1,3.0)  (ineq3) {\textcolor{red}{\tiny{$\leq\! 5$}}}; 
    \node[] at (-1, 4.5)  (ineq4) {\textcolor{red}{\tiny{$\leq\! 6$}}}; 
    \node[] at (-1, 6.0)  (ineq5) {\textcolor{red}{\tiny{$\leq\! 4$}}};  
    
    \draw [black,thick]   (v1) to (v2);
    \draw [black,thick]   (v2) to (v3);
    \draw [black,thick]   (v3) to (v4);
    \draw [black,thick]   (v4) to (v5);

    \node[draw, circle,minimum size=10pt,inner sep=2pt, outer sep=0pt] at (5, 0) (w1) {$2$}; 
    \node[draw, circle,minimum size=10pt,inner sep=2pt, outer sep=0pt] at (5, 1.5) (w2) {$3$}; 
    \node[draw, circle,minimum size=10pt,inner sep=2pt, outer sep=0pt] at (5,3)   (w3) {$1$}; 
    \node[draw, circle,minimum size=10pt,inner sep=2pt, outer sep=0pt] at (5, 4.5)  (w4) {$5$}; 
    \node[draw, circle,minimum size=10pt,inner sep=2pt, outer sep=0pt] at (5, 6.0)  (w5) {$4$}; 
    
    \node[] at (4,0)    (ineq1) {\textcolor{red}{\tiny{$\leq\! 1$}}}; 
    \node[] at (4, 1.5)   (ineq2) {\bcancel{\textcolor{red}{\tiny{$\leq\!4$}}}}; 
    \node[] at (3, 1.5)   (ineq2p) {\textcolor{red}{\tiny{$\leq\!2$}}}; 
    \node[] at (4,3.0)  (ineq3) {\bcancel{\textcolor{red}{\tiny{$\leq\! 5$}}}};
    \node[] at (3,3.0)  (ineq3p) {\textcolor{red}{\tiny{$\leq\! 3$}}}; 
    \node[] at (4, 4.5)  (ineq4) {\bcancel{\textcolor{red}{\tiny{$\leq\! 6$}}}}; 
    \node[] at (3, 4.5)  (ineq4p) {\textcolor{red}{\tiny{$\leq\! 3$}}}; 
    \node[] at (4, 6.0)  (ineq5) {\textcolor{red}{\tiny{$\leq\! 4$}}};  
    
    \draw [black,thick]   (w1) to (w2);
    \draw [black,thick]   (w2) to (w3);
    \draw [black,thick]   (w3) to (w4);
    \draw [black,thick]   (w4) to (w5);

        \draw[rounded corners,fill=blue,opacity=0.3] (9.5, -0.5) rectangle (10.5, 2) {}; 

\draw[rounded corners,fill=blue,opacity=0.3] (9.5, 2.5) rectangle (10.5, 5) {}; 

\draw[rounded corners,fill=blue,opacity=0.3] (9.5, 5.5) rectangle (10.5, 6.5) {};

    \node[draw, circle,minimum size=10pt,inner sep=2pt, outer sep=0pt] at (10, 0) (v1) {$2$}; 
    \node[draw, circle,minimum size=10pt,inner sep=2pt, outer sep=0pt] at (10, 1.5) (v2) {$3$}; 
    \node[draw, circle,minimum size=10pt,inner sep=2pt, outer sep=0pt] at (10,3)   (v3) {$1$}; 
    \node[draw, circle,minimum size=10pt,inner sep=2pt, outer sep=0pt] at (10, 4.5)  (v4) {$5$}; 
    \node[draw, circle,minimum size=10pt,inner sep=2pt, outer sep=0pt] at (10, 6.0)  (v5) {$4$}; 
    
    \node[] at (9,0)    (ineq1) {\textcolor{red}{\tiny{$\leq\! 1$}}}; 
    \node[] at (9, 1.5)   (ineq2) {\textcolor{red}{\tiny{$\leq\!2$}}}; 
    \node[] at (9,3.0)  (ineq3) {\textcolor{red}{\tiny{$\leq\! 3$}}}; 
    \node[] at (9, 4.5)  (ineq4) {\textcolor{red}{\tiny{$\leq\! 3$}}}; 
    \node[] at (9, 6.0)  (ineq5) {\textcolor{red}{\tiny{$\leq\! 4$}}};  
    
    \draw [black,thick]   (v1) to (v2);
    \draw [black,thin,dashed]   (v2) to (v3);
    \draw [black,thick]   (v3) to (v4);
    \draw [black,thin,dashed]   (v4) to (v5);

  \end{tikzpicture}
  \caption{A linear order $\p L$ with redundant $\rho$ and then with $\bar{\rho}^{\p L}$.}
  \label{fig:redundant_L}
\end{figure}
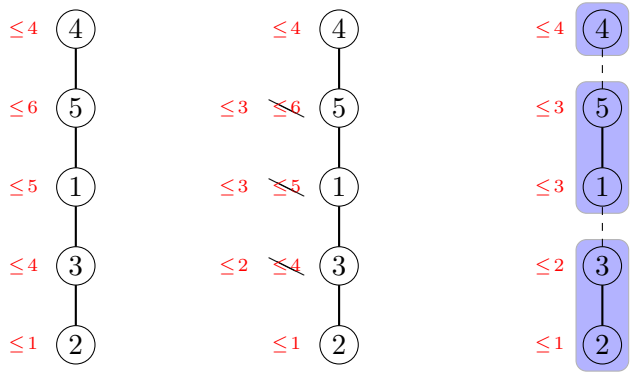

We use $\bar{\rho}^{\p L}$ to generalize the notion of descent compositions to account for the restriction map.
Define the \emph{reduced weak descent composition} of $(\p L,\omega, \rho)$, denoted by $\mathrm{rdes}(\p L,\omega, \rho)$,  as follows. 
Let $i_1<\cdots< i_k$ be all the descents in $\omega\circ\pi$.
Consider the chains $C_1,\dots,C_{k+1}$ defined by setting 
\begin{align}\label{eqn:chains}
C_j\coloneqq \pi(i_{j-1}+1)\precdot_{\p L} \cdots \precdot_{\p L} \pi(i_j),
\end{align}
where $i_0\coloneqq 0$ and $i_{k+1}\coloneqq n$.
For $j = 1,\dots, k+1$, set $c_j = \bar{\rho}^{\p L}(\pi(i_{j-1}+1))$, and define the $c_j$-th part of $\mathrm{rdes}(\p L,\omega, \rho)$  to equal $|C_j|$, and set all other parts to 0.
Note that $\pi(i_{j-1}+1)$ is the minimal element of the chain $C_j$, so the values $c_j$ are obtained by simply evaluating $\bar{\rho}^{\p L}$ at the minimal element of each chain $C_j$ from $j=1$ through $k+1$.
For the rightmost linear order in Figure~\ref{fig:redundant_L}, the dashed edges denote the descent edges whose removal results in the chains $C_1$, $C_2$ and $C_3$ from bottom to top. 
Picking the smallest value of $\bar{\rho}^{\p L}$ in each shaded region tells us that $c_1=1$, $c_2=3$, $c_3=4$.
Since $|C_1|=|C_2|=2$ and  $|C_3|=1$, we infer that 
$\mathrm{rdes}(\p L, \rho)=(2,0,2,1)$.
The reader may further verify that 
\begin{align}\label{eqn:ex_continued}
\p {F}_{(\p L,\omega,\rho)}=\slide{2021}(\x_4)+\slide{1121}(\x_4).
\end{align}
We remark here that we could have replaced the alphabet $\x_4$ above with any $\x_r$ for any $r\geq 4$.

\begin{remark}\label{rem:why_rdes}
Recall the folklore bijective correspondence between strong compositions of a nonnegative integer $n$ and subsets of $[n-1]$. Suppose that $S\subseteq [n-1]$ maps to the strong composition $\mathrm{comp}(S)$.
If $\mathrm{Des}(\pi)$ denotes the descent set of $\pi$, then from the definition of $\mathrm{rdes}(\p L,\omega,\rho)$, it follows that $\flatten{\mathrm{rdes}(\p L,\omega,\rho)}=\mathrm{comp}(\mathrm{Des}(\pi))$ \cite[Equation 3.4]{Assaf-Bergeron}, which explains the name reduced weak descent composition.
\end{remark}

In our example, we see that $\slide{\mathrm{rdes}(\p L,\omega, \rho)}$ is a term in the expansion of $\p {F}_{(\p L,\omega,\rho)}$ in slide polynomials.
We are particularly interested in  case where it is the \emph{only} term. 
To this end, we have the following result of Assaf-Bergeron \cite[Proposition 3.10]{Assaf-Bergeron}.

\begin{proposition}\label{prop:single_slide}
Consider a $\rho$-restricted labeled poset $(\p L,\omega,\rho)$ where $\p L$ is a linear order on $[n]$ given by $\pi(1)\precdot_{\p L}\cdots \precdot_{\p L} \pi(n)$.
Suppose that for all $i\in[n-1]$, we have that  $\omega\circ\pi(i) < \omega\circ\pi(i+1)$ implies
$\bar{\rho}^{\p L}(\pi(i)) = \bar{\rho}^{\p L}(\pi(i+1))$.
Then we have that 
\[
	 \p F_{(\p L,\omega,\rho)} = \slide{\mathrm{rdes}(\p L,\omega, \rho)}(\x_r).
\]
where $r\geq \max\{\bar{\rho}^{\p L}(\pi(i)\}_{i\in [n]}$.
\end{proposition}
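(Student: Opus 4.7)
The plan is to establish the proposition by exhibiting an explicit bijection between $\mathcal{A}(\p L, \omega, \rho)$, which indexes the monomials of $\p F_{(\p L, \omega, \rho)}$, and $\p C_{\leq {\bf a}}^{(r)}$, which indexes the monomials of $\slide{{\bf a}}(\x_r)$, where ${\bf a} = \mathrm{rdes}(\p L, \omega, \rho)$. The natural candidate sends a partition $f$ to its weight composition ${\bf b}(f)$ defined by $b_k(f) = |f^{-1}(k)|$. This map is injective: knowing the multiset $\{f(\pi(i))\}_{i=1}^n$ encoded by ${\bf b}(f)$ recovers $f$ uniquely, since any $(\p L,\omega)$-partition is weakly increasing along the chains $C_j$ of $\p L$ and strictly increasing across chain boundaries, forcing the sorted multiset of values to be assigned to $\pi(1), \ldots, \pi(n)$ in order.

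The first substantive step is to unpack the hypothesis. Applying the ascent case of \eqref{eqn:barrho} to positions internal to a chain $C_j$, the hypothesis forces $\bar{\rho}^{\p L}$ to be constant on $C_j$ with common value $c_j$, and the descent case of \eqref{eqn:barrho} at a chain boundary gives $c_j \leq c_{j+1} - 1$. Hence $c_1 < c_2 < \cdots < c_{k+1}$, so ${\bf a}$ has its $k+1$ nonzero entries $|C_j|$ at strictly increasing positions $c_j$. To verify that the candidate map lands in $\p C_{\leq {\bf a}}^{(r)}$, observe that since $f|_{C_j}$ is weakly increasing with maximum bounded by $c_j$, the multiplicities of the distinct values taken by $f$ on $C_j$ form a strong composition of $|C_j|$; concatenating these across chains (whose value ranges are disjoint by the strict inter-chain increases) yields $\flatten{{\bf b}(f)}$ refining $\flatten{{\bf a}} = (|C_1|, \ldots, |C_{k+1}|)$. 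A top-down peeling argument on chains then yields ${\bf b}(f) \leq_{\mathrm{c}} {\bf a}$: if every element of the top chain $C_{k+1}$ takes value $c_{k+1}$, strip it and recurse on the remaining chains; otherwise ${\bf b}(f)$ agrees with ${\bf a}$ at positions $>c_{k+1}$ but satisfies ${\bf b}(f)[c_{k+1}] < |C_{k+1}| = {\bf a}[c_{k+1}]$, which witnesses strict inequality.

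The main obstacle is proving surjectivity: given ${\bf b} \in \p C_{\leq {\bf a}}^{(r)}$, produce a preimage. The refinement condition canonically partitions the nonzero entries of ${\bf b}$ into consecutive blocks summing to $|C_1|, \ldots, |C_{k+1}|$; let $J_j \subseteq [r]$ be the positions appearing in the $j$-th block, so $J_1 < J_2 < \cdots < J_{k+1}$. The key assertion is that $\max(J_j) \leq c_j$ for every $j$, and establishing this is where the reverse-lex component of $\leq_{\mathrm{c}}$ becomes essential. I would prove it by reverse induction on $j$ starting from $j = k+1$: at the base, a nonzero entry of ${\bf b}$ strictly past $c_{k+1}$ would violate $\leq_{\mathrm{c}}$ against ${\bf a}$'s rightmost nonzero; inductively, assuming the bound holds for all $l > j$, any entry of $J_j$ beyond $c_j$ would shift mass into positions in $(c_j, c_{j+1})$ where ${\bf a}$ vanishes, forcing ${\bf b} \not\leq_{\mathrm{c}} {\bf a}$ via comparison at the appropriate position. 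Once the bound is in hand, define $f$ by listing the values in $J_j$ with their multiplicities $b_p$ in weakly increasing order and assigning them to the elements of $C_j$ along $\p L$; the resulting $f$ automatically satisfies the $(\p L,\omega)$-partition inequalities (weak increase within chains, strict increase at descents since $\max(J_j) < \min(J_{j+1})$) and the restrictions $f(v) \leq c_j = \bar{\rho}^{\p L}(v)$ for $v \in C_j$. This argument parallels the proof of \cite[Proposition~3.10]{Assaf-Bergeron} specialized from general posets to linear orders.
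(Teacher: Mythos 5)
Your overall strategy is sound, and it is worth noting that the paper itself supplies no proof here: it quotes the statement from Assaf--Bergeron \cite[Proposition 3.10]{Assaf-Bergeron}. A direct verification via the content map $f\mapsto {\bf b}(f)$ with $b_k(f)=|f^{-1}(k)|$ is exactly the right approach, and your preliminary steps are correct: the hypothesis forces $\bar{\rho}^{\p L}$ to be constant, equal to $c_j$, on each chain $C_j$, whence $c_1<c_2<\cdots<c_{k+1}$; injectivity follows because every $(\p L,\omega)$-partition is weakly increasing along $\p L$; and $\flatten{{\bf b}(f)}$ refines $\flatten{{\bf a}}$ by the block decomposition you describe.

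The gap is in the surjectivity step, at the assertion $\max(J_j)\le c_j$. The reverse induction you sketch does not close: the relation $\leq_{\mathrm{c}}$, read literally as ``refinement plus smaller in reverse lexicographic order,'' constrains only the single rightmost position at which ${\bf b}$ and ${\bf a}$ disagree, so the presence of mass of ${\bf b}$ at a position in $(c_j,c_{j+1})$ where ${\bf a}$ vanishes is not by itself a violation --- the rightmost disagreement may occur further right, with ${\bf b}$ smaller there. Concretely, take ${\bf a}=(2,0,1,0,1)$ (chains of sizes $2,1,1$ with $c_1=1$, $c_2=3$, $c_3=5$; realizable, e.g., with $\omega\circ\pi=2431$ and $\rho$ giving $\bar{\rho}^{\p L}=(1,1,3,5)$) and ${\bf b}=(0,2,1,1,0)$. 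Then $\flatten{{\bf b}}=\flatten{{\bf a}}=(2,1,1)$ and the rightmost disagreement is at position $5$ where $0<1$, so ${\bf b}$ passes the literal test; yet $\max(J_1)=2>c_1=1$, and indeed $x_2^2x_3x_4$ cannot be a monomial of $\p F_{(\p L,\omega,\rho)}$ since both elements of $C_1$ are forced to receive the color $1$. What this really exposes is that the paper's gloss of $\leq_{\mathrm{c}}$ is too weak; the condition in Assaf--Searles' Definition 3.6 is dominance, $b_1+\cdots+b_p\ge a_1+\cdots+a_p$ for all $p$, together with refinement. With dominance in place both halves of your bijection become one-line partial-sum computations and neither the induction nor the peeling argument is needed: for well-definedness, $\sum_{q\le p}b_q(f)=|\{v: f(v)\le p\}|\ge|\{v: \bar{\rho}^{\p L}(v)\le p\}|=\sum_{q\le p}a_q$ because $f(v)\le\bar{\rho}^{\p L}(v)$; for surjectivity, if $\max(J_j)>c_j$ then the positions $\le c_j$ carry at most $|C_1|+\cdots+|C_j|-b_{\max(J_j)}$ of the mass of ${\bf b}$, contradicting dominance at $p=c_j$. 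Your construction of the preimage from the blocks $J_j$ is correct once that bound is in hand.
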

Going back to the rightmost linear order in Figure~\ref{fig:ex_computation}, we see that $\p F_{(\p L,\omega,\rho)}$ equals the slide polynomial $\slide{021}(\x_3)$ by Proposition~\ref{prop:single_slide}.

\section{Slide-positivity of $\p X_D(\x_r;t)$}\label{sec:main_result}

We proceed to establish our central result that $\p X_D(\x_r;t)$ expands in terms of  slide polynomials with coefficients in $\mathbb{N}[t]$.
To this end, we need more terminology.

Consider any graph $G=([n],E)$. 
For $\pi\in S_n$, define $\mathrm{inv}_G(\pi)$ to be the number of \emph{$G$-inversions} of $\pi$, that is,
\begin{align}\label{eqn:def_G_inversion}
\mathrm{inv}_G(\pi)=|\{\{i,j\}\in E\suchthat i<j \text{ and } \pi(i)>\pi(j)\}|.
\end{align} 
Given a poset $P$ on $[n]$, we say that $i \in [n-1]$ is a \emph{$P$-descent}
 of $\pi$ if $\pi(i+1) \prec_P \pi(i) $. 
 We denote the set of $P$-descents of $\pi$ by $\mathrm{Des}_P (\pi)$. 
 If $i$ and $j$ are comparable in $P$, we denote this by $i\sim_P j$. 
 Otherwise, we write $i\nsim_P j$.
Recall that  the \emph{incomparability graph} of a poset $P$ is the simple graph whose vertex set is $[n]$ and  edges are given by $\{i,j\}$ where $i\nsim_P j$.

\textbf{From this point onward, fix a partial Dyck path $D\in P_{n,r}$}. 
Let $G\coloneqq G_D$ be the corresponding Dyck graph, and let $\rho\coloneqq \rho_D$ be the restriction induced by $D$. Let $E$ denote the set of edges of $G$.
 We realize $G$ as the incomparability graph of a poset $P_D$ on $[n]$ as follows: declare $i\prec_{P_D} j$ if and only if $j>i$ and $\{i,j\}\notin E$.
Given $\pi \in S_n$, let $\p L_{\pi}$ be the linear order on the vertices of $G$ given by $\pi(1)\precdot_{\p L_{\pi}}\cdots \precdot_{\p L_{\pi}}\pi(n)$.
This linear order induces an acyclic orientation $\bfo\coloneqq \bfo_{\pi}$ of $G$ obtained by directing  $\{\pi(i),\pi(j)\}\in E$ for $i<j$ from $\pi(j)$ to $\pi(i)$.
Observe that  $\bfo$  gives rise to a poset $P_{\bfo}$ on $[n]$ by taking the transitive closure of the relation given by $j\prec_{P_{\bfo}} i$ if there is a directed edge from $i$ to $j$ in $\bfo$.
Furthermore, $P_{\bfo}$ and ${\p L_{\pi}}$ also inherit the restriction map $\rho$.
For the acyclic orientation $\bfo$ in Figure~\ref{fig:ao_gd}, the $P_{\bfo}$ is shown on the left in Figure~\ref{fig:p_o}.
A permutation $\pi$ which induces this $\bfo$ is $645123$.
 
\begin{figure}[h]
\includegraphics{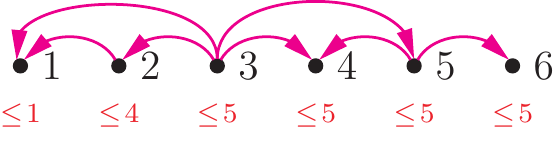}
\caption{An acyclic orientation $\bfo$ of the graph in Figure~\ref{fig:g_d}.}
\label{fig:ao_gd}
\end{figure}

We say that a coloring $f$ of $G$ is \emph{compatible} with $\bfo$ if for every directed edge $i\to j$ we have $f(i)>f(j)$.
For  $\p X_{D}(\x_r;t)$, we are interested in the proper colorings $f:[n]\to \mathbb{Z}_{> 0}$ that further satisfy $f(i)\leq \rho(i)$.
Since every proper coloring is compatible with a unique acyclic orientation, we can partition the set of $\rho$-restricted proper colorings based on compatibility.
This leads us to interpret such proper colorings as $\rho$-restricted $(P_{\bfo},\omega)$-partitions for an appropriate $\omega$.

We seek  a labeling $\ombfo$ that induces strict inequalities on all cover relations in $P_{\bfo}$. 
More precisely, since we are interested in colorings compatible with $\bfo$, our labeling $\ombfo$ must satisfy  $\ombfo(i)<\ombfo(j)$ if $j\precdot_{P_{\bfo}} i$.
We construct $\ombfo$ as follows.
Initialize $\mathsf{ctr}$ to $ 1$, $\p G$ to $G$, and perform the following steps. 
\begin{enumerate}
\item Find the largest $i$ such that vertex $i$ in $\p G$ has indegree $0$ with respect to $\mathbf{o}$.
\item Set $\ombfo(i)=\mathsf{ctr}$, and subsequently increment $\mathsf{ctr}$ by $1$.
\item Remove the vertex $i$ along with all edges incident to it, and let $\p G$ be the new directed graph obtained with the acyclic orientation inherited from $\mathbf{o}$. 
If there is at least one vertex in $\p G$, return to step $(1)$, else terminate.
\end{enumerate}
For the $\bfo$ in Figure~\ref{fig:ao_gd}, this algorithm gives $\omega(1)=6$, $\omega(2)=5$, $\omega(3)=1$, $\omega(4)=4$, $\omega(5)=2$, and $\omega(6)=3$.
The triple $(P_{\bfo},\ombfo,\rho)$ is depicted on the right in Figure~\ref{fig:p_o}.
On the left, the numbers outside nodes represent the numbering inherited from the graph. On the right, the numbers within nodes represent the labeling $\ombfo$.
\begin{figure}[h]
  \begin{tikzpicture}[scale=0.7]
    \node[draw, circle,minimum size=4pt,inner sep=2pt, outer sep=0pt,fill=black] at (-4, 0) [label=left:1]   (v1) {}; 
    \node[draw, circle,minimum size=4pt,inner sep=2pt, outer sep=0pt,fill=black] at (-4, 2) [label=right:2]  (v2) {}; 
    \node[draw, circle,minimum size=4pt,inner sep=2pt, outer sep=0pt,fill=black] at (-2,4) [label=right:3] (v3) {}; 
    \node[draw, circle,minimum size=4pt,inner sep=2pt, outer sep=0pt,fill=black] at (-1, 0) [label=left:4]  (v4) {}; 
    \node[draw, circle,minimum size=4pt,inner sep=2pt, outer sep=0pt,fill=black] at (0, 2) [label=right:5]   (v5) {}; 
    \node[draw, circle,minimum size=4pt,inner sep=2pt, outer sep=0pt,fill=black] at (1, 0) [label= right:6] (v6) {}; 
    
    \node[] at (-4,-0.5)    (ineq1) {\textcolor{red}{\small{$\leq\! 1$}}}; 
    \node[] at (-4, 2.5)   (ineq2) {\textcolor{red}{\small{$\leq\!4$}}}; 
    \node[] at (-2,4.5)  (ineq3) {\textcolor{red}{\small{$\leq\! 5$}}}; 
    \node[] at (-1, -0.5)  (ineq4) {\textcolor{red}{\small{$\leq\! 5$}}}; 
    \node[] at (0, 2.5)  (ineq5) {\textcolor{red}{\small{$\leq\! 5$}}}; 
    \node[] at (1, -0.5)   (ineq6) {\textcolor{red}{\small{$\leq\! 5$}}}; 
    
    \draw [black,thick]   (v1) to (v2);
    \draw [black,thick]   (v2) to (v3);
    \draw [black,thick]   (v3) to(v5);
    \draw [black,thick]   (v4) to (v5);
    \draw [black,thick]   (v5) to (v6);
    
    \node[draw, circle,minimum size=10pt,inner sep=0pt, outer sep=0pt] at (5, 0) (v1) {$6$}; 
    \node[draw, circle,minimum size=10pt,inner sep=0pt, outer sep=0pt] at (5, 2) (v2) {$5$}; 
    \node[draw, circle,minimum size=10pt,inner sep=0pt, outer sep=0pt] at (7,4)   (v3) {$1$}; 
    \node[draw, circle,minimum size=10pt,inner sep=0pt, outer sep=0pt] at (8, 0)  (v4) {$4$}; 
    \node[draw, circle,minimum size=10pt,inner sep=0pt, outer sep=0pt] at (9, 2)  (v5) {$2$}; 
    \node[draw, circle,minimum size=10pt,inner sep=0pt, outer sep=0pt] at (10, 0)  (v6) {$3$};

    \node[] at (5,-0.5)    (ineq1) {\textcolor{red}{\small{$\leq\! 1$}}}; 
    \node[] at (5, 2.6)   (ineq2) {\textcolor{red}{\small{$\leq\!4$}}}; 
    \node[] at (7,4.5)  (ineq3) {\textcolor{red}{\small{$\leq\! 5$}}}; 
    \node[] at (10, -0.5)  (ineq4) {\textcolor{red}{\small{$\leq\! 5$}}}; 
    \node[] at (9, 2.60)  (ineq5) {\textcolor{red}{\small{$\leq\! 5$}}}; 
    \node[] at (8, -0.5)   (ineq6) {\textcolor{red}{\small{$\leq\! 5$}}}; 
    
    \draw [black,thick]   (v1) to (v2);
    \draw [black,thick]   (v2) to (v3);
    \draw [black,thick]   (v3) to(v5);
    \draw [black,thick]   (v4) to (v5);
    \draw [black,thick]   (v5) to (v6); 
    
  \end{tikzpicture}
  \caption{The poset $P_{\bfo}$ (left) and the triple $(P_{\bfo},\ombfo,\rho)$ (right).}
  \label{fig:p_o}
\end{figure}

\noindent We are ready to establish a key lemma that relates $P_D$-descents of $\pi$ and ascents in $\ombfo\circ \pi$.
\begin{lemma}\label{lem:key_lemma}
For $i\in [n-1]$, we have that
\[
\pi(i)\succ_{P_D}\pi(i+1) \Longleftrightarrow \ombfo\circ \pi (i) <\ombfo\circ\pi(i+1).
\]
\end{lemma}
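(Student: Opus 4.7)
The plan is to split on whether $\{\pi(i),\pi(i+1)\}$ is an edge of $G_D$.

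In the edge case, $\pi(i)$ and $\pi(i+1)$ are incomparable in $P_D$, so the left-hand side is false. The orientation $\bfo$ directs this edge from $\pi(i+1)$ (the later position) to $\pi(i)$ (the earlier position), yielding a cover relation $\pi(i)\precdot_{P_{\bfo}}\pi(i+1)$. Since the greedy algorithm defining $\ombfo$ cannot label $\pi(i)$ before the vertex $\pi(i+1)$ above it is removed, we obtain $\ombfo(\pi(i+1))<\ombfo(\pi(i))$, so the right-hand side is also false.

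In the non-edge case, $\pi(i)$ and $\pi(i+1)$ are comparable in $P_D$, so the left-hand side is equivalent to $\pi(i)>\pi(i+1)$. Setting $u=\min(\pi(i),\pi(i+1))$ and $v=\max(\pi(i),\pi(i+1))$, a routine check confirms that both sub-cases of the iff reduce to the single claim: the algorithm labels $v$ strictly before $u$. To establish this claim I would argue by contradiction, assuming $u$ is labeled at step $t_u$ before $v$. At this step $u$ is the largest source in the remaining graph; since $v>u$ is still in the graph, $v$ is not a source, so it has an in-graph neighbor $w_1$ with $\pi^{-1}(w_1)>\pi^{-1}(v)$. Because $u$ is not a neighbor of $v$, we have $w_1\neq u$, hence $\pi^{-1}(w_1)>\pi^{-1}(u)$ as well. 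The Dyck graph interval property (recalled right after the definition of $G_D$) excludes $w_1<u$, since otherwise $w_1<u<v$ together with $\{v,w_1\}\in E$ would force $\{u,v\}\in E$; and $\{u,w_1\}\notin E$, since otherwise the directed edge $w_1\to u$ in $\bfo$ would deny $u$ source status. So $w_1>u$ and $\{u,w_1\}\notin E$.

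Iterating this observation produces a chain $w_1,w_2,\ldots$ of in-graph vertices, each a neighbor of its predecessor, with strictly increasing $\pi^{-1}$-values and satisfying $w_k>u$ and $\{u,w_k\}\notin E$ for every $k$. At each step $w_k>u$ in the graph cannot be a source (since $u$ is the largest source), so a blocker $w_{k+1}$ exists; the same Dyck argument applied to the edge $\{w_k,w_{k+1}\}$ rules out $w_{k+1}<u$ (else $\{u,w_k\}$ would be forced to be an edge), and the source property of $u$ rules out $\{u,w_{k+1}\}\in E$. Since the $\pi^{-1}(w_k)$ are bounded by $n$, the chain terminates at some $w_N$ with no further blocker, i.e., a source strictly greater than $u$ in the graph at step $t_u$ --- contradicting the maximality of $u$ among sources.

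I expect the main obstacle to be precisely this blocker-chain construction in the non-edge case, where one must carefully invoke the Dyck interval property to preserve both invariants $w_k>u$ and $\{u,w_k\}\notin E$ along the chain. The edge case is essentially routine once the greedy construction of $\ombfo$ is unpacked.
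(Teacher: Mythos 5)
Your proof is correct and runs on the same engine as the paper's: both dispose of the case $\{\pi(i),\pi(i+1)\}\in E$ by the direction of the oriented edge, and in the non-edge case both freeze the labeling algorithm at the moment the smaller vertex $u$ would be labeled, build a directed path of unlabeled in-neighbors ending at the larger vertex $v$, and contradict via the Dyck interval property. The only (welcome) difference is in packaging: you unify both directions of the biconditional into the single claim that $v$ is labeled before $u$, and you apply the interval property at every step to maintain the invariants $w_k>u$ and $\{u,w_k\}\notin E$ (forcing non-termination of the chain), whereas the paper lets its chain terminate at a source below $\pi(i+1)$ and applies the interval property once at the crossing index --- the content is the same, and your version fills in the sub-case the paper leaves as ``very similar.''
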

\begin{proof}
We establish the forward implication first.
Assume $\pi(i)\succ_{P_D} \pi(i+1)$. 
From the definition of $P_D$ we infer the following two facts: first, $\pi(i)>\pi(i+1)$, and second, $\{\pi(i+1),\pi(i)\}\notin E$. 

We claim that $\pi(i)\nsim_{P_{\bfo}}\pi(i+1)$. 
Indeed, if this is not the case, then there exists a directed path from $\pi(i+1)$ to $\pi(i)$ in $\bfo$, and thereby, a vertex $\pi(j)$ where $j\neq i,i+1$ that lies on this  path.
Since there is a directed path from $\pi(i+1)$ to $\pi(j)$, we infer that $i+1>j$. On the other hand, the directed path from $\pi(j)$ to $\pi(i)$ implies $j>i$. It follows that $i+1>j>i$, which is clearly absurd.

Next we show that $\ombfo(\pi(i))<\ombfo(\pi(i+1))$.
Assume to the contrary that $ \ombfo(\pi(i)) >\ombfo(\pi(i+1))$, and consider the instant in our labeling algorithm when $\pi(i+1)$ is assigned a label.
As $\pi(i)>\pi(i+1)$ and $\pi(i)$ is unlabeled at that instant, there is an unlabeled vertex $\pi(i_1)$ such that there is an edge in $\bfo$ directed from $\pi(i_1)$ to $\pi(i)$.
Furthermore, $\pi(i_1)\nsim_{P_{\bfo}}\pi(i+1)$. If not, the existence of a directed path from $\pi(i+1)$ to $\pi(i_1)$ would imply that $\pi(i+1)\sim_{P_{\bfo}}\pi(i)$, which is false.
On the other hand, the existence of a directed path from $\pi(i_1)$ to $\pi(i+1)$ would contradict the fact that our labeling algorithm assigns a label to $\pi(i+1)$ before $\pi(i_1)$.

Repeating this argument, we can construct a directed path $\pi(i_k) \to \pi(i_{k-1}) \to \cdots \to\pi(i_1)\to \pi(i_0)=\pi(i)$
with the property that for $0\leq j\leq k$, 
each $\pi(i_j)$ is unlabeled and satisfies $\pi(i_j)\nsim_{P_{\bfo}}\pi(i+1)$. 
Consider a maximal such path.
Then all vertices with edges directed towards $\pi(i_k)$ are already labeled.
This in turn implies that $\pi(i_k)<\pi(i+1)$, as the opposite inequality implies that $\pi(i_k)$ has a smaller label $\pi(i+1)$, which is not the case.

Now note that there must exist a $1\leq j\leq k$ such that $\pi(i_j)<\pi(i+1)$ but 
$\pi(i_{j-1}) > \pi(i+1)$.
Indeed, if such a $j$ did not exist, then the edge $\{\pi(i_1),\pi(i)\}$ would imply that 
$\{\pi(i+1),\pi(i)\}\in E$ as $G$ is a Dyck graph.
But we have already established that $\{\pi(i+1),\pi(i)\}\notin E$.
Now pick any $j$ whose existence we just established.
Again using the fact that  $G$ is a Dyck graph, we conclude that $\{\pi(i+1),\pi(i_{j-1})\}\in E$, which in turn implies that $\pi(i_{j-1})\sim_{P_{\bfo}}\pi(i+1)$, which is false. This completes the proof of the forward direction.

We keep our exposition on the reverse implication brief.
Assuming $\ombfo\circ\pi(i)<\ombfo\circ \pi (i+1)$, we need to show that $\pi(i)\succ_{P_D} \pi(i+1)$.
Once again, observe that $\pi(i)\nsim_{P_{\bfo}}\pi(i+1)$.
If not, we would infer the existence of a directed path from $\pi(i+1)$ to $\pi(i)$ in $\bfo$, which in turn would contradict our hypothesis that $\ombfo(\pi(i))<\ombfo(\pi(i+1))$.
To establish that $\pi(i)\succ_{P_D} \pi(i+1)$,  we proceed by contradiction.
There are two possibilities if $\pi(i) \nsucc_{P_D}\pi(i+1)$: either $\pi(i)\nsim_{P_D}\pi(i+1)$ or $\pi(i+1)\succ_{P_D} \pi(i)$.
In the former, we have $\{\pi(i),\pi(i+1)\}\in E$, which then is necessarily directed from $\pi(i+1)$ to $\pi(i)$ in $\bfo$.
This implies $\ombfo(\pi(i+1))<\ombfo(\pi(i))$, contrary to our assumption.
Finally, the case $\pi(i+1)\succ_{P_D} \pi(i)$ remains.
The argument for this is very similar to that presented in the proof of the forward direction. We omit the details.
\end{proof}

In view of the previous lemma, we now establish that $\p F_{(\p L_{\pi},\ombfo,\rho)}$ is equal to a slide polynomial. 
To this end, we recast the reduced weak descent composition $\mathrm{rdes}(\p L_{\pi},\ombfo,\rho)$ defined in Subsection~\ref{subsec:lin_orders}  in terms of $P_D$-descents of $\pi$ rather than the descent set of $\ombfo\circ \pi$.
Indeed, using Lemma~\ref{lem:key_lemma}, we can redefine $\bar{\rho}^{\p L_{\pi}}$ as follows:
For $i$ from $n$ down to $1$, set
\begin{align}\label{eqn:barrho_part2}
\bar{\rho}^{\p L_{\pi}}(\pi(i))\coloneqq
\left \lbrace
\begin{array}{ll}
\rho(\pi(n)) & i=n\\
\min\{\bar{\rho}^{\p L_{\pi}}(\pi(i+1)),\rho(\pi(i))\} & \pi(i)\succ_{P_D}\pi(i+1) \\
\min\{\bar{\rho}^{\p L_{\pi}}(\pi(i+1))-1, \rho(\pi(i))\}
& \pi(i)\nsucc_{P_D}\pi(i+1).
\end{array}
\right.
\end{align}
The above formulation removes the dependence of the recursive definition of $\bar{\rho}^{\p L_{\pi}}$ on $\ombfo$.
The procedure for computing $\mathrm{rdes}(\p L_{\pi},\ombfo,\rho)$ is the same, except the role played by descents of $\ombfo\circ\pi$ is now essayed by $P_D$-ascents of $\pi$.
To emphasize the suppression of $\ombfo$, we write $\mathrm{rdes}(\p L_{\pi},\rho)$ instead of $\mathrm{rdes}(\p L_{\pi},\ombfo,\rho)$.
The following result explains how slide polynomials enter our picture.
\begin{lemma}\label{lem:equals_single_slide}
The weight generating function $\p F_{(\p L_{\pi},\ombfo,\rho)}$ equals the slide polynomial $\slide{\mathrm{rdes}(\p L_{\pi},\rho)}(\x_r)$.
\end{lemma}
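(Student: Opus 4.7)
The plan is to invoke Proposition~\ref{prop:single_slide} applied to the triple $(\p L_{\pi}, \ombfo, \rho)$. Thus the task reduces to verifying the hypothesis of that proposition: for every $i \in [n-1]$, the inequality $\ombfo\circ\pi(i) < \ombfo\circ\pi(i+1)$ must imply $\bar{\rho}^{\p L_{\pi}}(\pi(i)) = \bar{\rho}^{\p L_{\pi}}(\pi(i+1))$. Once this is established, Proposition~\ref{prop:single_slide} immediately yields the desired identity, since by construction $\mathrm{rdes}(\p L_{\pi}, \ombfo, \rho) = \mathrm{rdes}(\p L_{\pi}, \rho)$ under our suppression of $\ombfo$.

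To carry out the verification, I would first apply Lemma~\ref{lem:key_lemma} to translate $\ombfo\circ\pi(i) < \ombfo\circ\pi(i+1)$ into the combinatorially transparent condition $\pi(i) \succ_{P_D} \pi(i+1)$. Under this condition, the recursive formula~\eqref{eqn:barrho_part2} tells us that
\[
\bar{\rho}^{\p L_{\pi}}(\pi(i)) = \min\bigl\{\bar{\rho}^{\p L_{\pi}}(\pi(i+1)),\, \rho(\pi(i))\bigr\},
\]
so it suffices to prove the inequality $\rho(\pi(i)) \geq \bar{\rho}^{\p L_{\pi}}(\pi(i+1))$.

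The proof of this inequality is where the geometric structure of the Dyck path enters. By the definition of $P_D$, the relation $\pi(i) \succ_{P_D} \pi(i+1)$ entails $\pi(i) > \pi(i+1)$ as integers. The restriction map $\rho = \rho_D$ is weakly increasing on $[n]$ (recall from Subsection~\ref{subsec:dyck_paths_and_graphs} that $0 \le \rho_D(1) \le \cdots \le \rho_D(n) \le r$), so $\rho(\pi(i)) \geq \rho(\pi(i+1))$. Moreover, an easy induction on $i$ using~\eqref{eqn:barrho_part2} gives $\bar{\rho}^{\p L_{\pi}}(\pi(i+1)) \leq \rho(\pi(i+1))$. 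Chaining these two inequalities yields $\rho(\pi(i)) \geq \bar{\rho}^{\p L_{\pi}}(\pi(i+1))$, as required.

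There is no real obstacle here beyond correctly invoking the previous lemma; the crux is recognizing that Lemma~\ref{lem:key_lemma} converts the hypothesis of Proposition~\ref{prop:single_slide} into an assertion whose truth is forced by the monotonicity of $\rho_D$, which itself reflects the fact that $D$ stays weakly above $y=x$. In a sense, the real work of this lemma has already been done in Lemma~\ref{lem:key_lemma}.
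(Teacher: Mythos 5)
Your proof is correct and follows essentially the same route as the paper: reduce to the hypothesis of Proposition~\ref{prop:single_slide} via Lemma~\ref{lem:key_lemma}, then use the monotonicity of $\rho_D$ together with $\bar{\rho}^{\p L_{\pi}}(\pi(i+1)) \le \rho(\pi(i+1))$ to see that the minimum in \eqref{eqn:barrho_part2} equals $\bar{\rho}^{\p L_{\pi}}(\pi(i+1))$. You merely spell out the chain of inequalities that the paper dismisses as ``immediate,'' and your bound $\bar{\rho}^{\p L_{\pi}} \le \rho \le r$ also implicitly covers the paper's closing remark that the alphabet $\x_r$ is large enough for Proposition~\ref{prop:single_slide} to apply.
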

\begin{proof}
By Proposition~\ref{prop:single_slide} and Lemma~\ref{lem:key_lemma}, it suffices to verify that $\pi(i)\succ_{P_D} \pi(i+1)$ implies $\bar{\rho}^{\p L_{\pi}}(\pi(i))=\bar{\rho}^{\p L_{\pi}}(\pi(i+1))$.
This is immediate as $\pi(i)\succ_{P_D}\pi(i+1)$ implies $\rho(\pi(i))\geq \rho(\pi(i+1))$. 
The recursive description of $\bar{\rho}^{\p L_{\pi}}$ in \eqref{eqn:barrho_part2} implies the claim. 
Note that our choice of the alphabet $\x_r$ is justified as $\rho(i)\leq r$ for all $i\in [n]$.
\end{proof}

As an example, consider the graph $G$ coming from a partial Dyck path $D\in P_{3,3}$ in Figure~\ref{fig:partial_dyck_2}.
The corresponding $P_D$ has only relation: $3\succ_{P_D} 1$.
The six linear orders on the vertices of $G$ along with the modified restrictions $\bar{\rho}^{\p L_{\pi}}$ are depicted in Figure~\ref{fig:six_linear_orders}. 
The $\p F_{(\p L_{\pi},\ombfo,\rho)}$ corresponding to each linear order from left to right are: $\slide{(1,1,1)}(\x_3)$, $\slide{(1,1,1)}(\x_3)$, $\slide{(2,0,1)}(\x_3)$, $\slide{(1\vbar 2)}(\x_3)$, $\slide{(1\vbar 1,0,1)}(\x_3)$, and $\slide{(1,1\vbar 1)}(\x_3)$.
Note that although the last three of these are in fact $0$, they  will acquire meaning in Subsection~\ref{subsec:backstable}.

\begin{figure}[h]
\begin{tikzpicture}[scale=.4]
    \draw[gray,very thin] (0,0) grid (8,8);
    \draw[line width=0.25mm, black, <->] (0,1)--(8,1);
    \draw[line width=0.25mm, black, <->] (1,0)--(1,8);
    
    \draw[rounded corners,fill=blue,opacity=0.3] (1, 1) rectangle (2, 2) {}; 
    \draw[rounded corners,fill=blue,opacity=0.3] (2, 2) rectangle (3, 3) {}; 
    \draw[rounded corners,fill=blue,opacity=0.3] (3, 3) rectangle (4, 4) {}; 
      \draw[rounded corners,fill=red,opacity=0.3] (4, 4) rectangle (5, 5) {}; 
    \draw[rounded corners,fill=red,opacity=0.3] (5, 5) rectangle (6, 6) {}; 
    \draw[rounded corners,fill=red,opacity=0.3] (6, 6) rectangle (7, 7) {};  
         
    \node[draw, circle,minimum size=3pt,inner sep=0pt, outer sep=0pt, fill=blue] at (1, 4)   (start) {};
    \node[draw, circle,minimum size=3pt,inner sep=0pt, outer sep=0pt, fill=blue] at (7, 7)   (end) {}; 
    \node[] at (6.5, 6.5)   (v1) {$6$}; 

  \node[] at (1.5, 1.5)   (c1) {$1$}; 
  \node[] at (2.5, 2.5)   (c2) {$2$}; 
  \node[] at (3.5, 3.5)   (c3) {$3$}; 
  \node[] at (4.5, 4.5)   (c4) {$4$}; 
  \node[] at (5.5, 5.5)   (c5) {$5$}; 
  
    \draw[blue, line width=0.3mm] (1,4)--(2,4);
    \draw[blue, line width=0.3mm] (2,4)--(2,5);
    \draw[blue, line width=0.3mm] (2,5)--(4,5);
    \draw[blue, line width=0.3mm] (4,5)--(4,6);
    \draw[blue, line width=0.3mm] (4,6)--(5,6);
    \draw[blue, line width=0.3mm] (5,6)--(5,7);
    \draw[blue, line width=0.3mm] (5,7)--(7,7);
    
    \node[draw, circle,minimum size=4pt,inner sep=0pt, outer sep=0pt, fill=black] at (14, 3) [label=right:1]   (v1) {}; 
    \node[draw, circle,minimum size=4pt,inner sep=0pt, outer sep=0pt, fill=black] at (17, 3) [label=right:2]  (v2) {}; 
    \node[draw, circle,minimum size=4pt,inner sep=0pt, outer sep=0pt, fill=black] at (20,3)  [label=right:3] (v3) {}; 
    
    \draw [black,thick]   (v1) to[out=60,in=120] (v2);
    \draw [black,thick]   (v2) to[out=60,in=120] (v3);
    
     \node[] at (14,2.25)    (ineq1) {\textcolor{red}{\tiny{$\leq\! 1$}}}; 
    \node[] at (17, 2.25)   (ineq2) {\textcolor{red}{\tiny{$\leq\!3$}}}; 
    \node[] at (20,2.25)  (ineq3) {\textcolor{red}{\tiny{$\leq\! 3$}}}; 
   
  \end{tikzpicture}
\captionof{figure}{A partial Dyck path $D\in P_{3,3}$ and associated $G_D$.}
\label{fig:partial_dyck_2}
\end{figure}
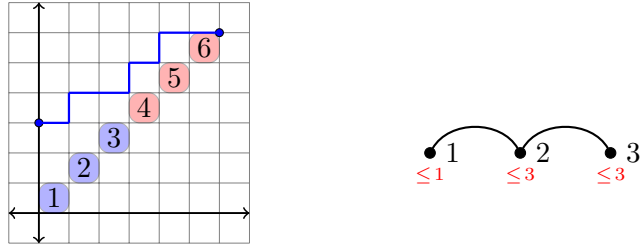

\begin{figure}[h]
\centering
  \begin{tikzpicture}[scale=0.8]    
         \node[draw, circle,minimum size=4pt,inner sep=2pt, outer sep=0pt,fill=black] at (-8, 0) [label=left:1] (a1) {}; 
    \node[draw, circle,minimum size=4pt,inner sep=2pt, outer sep=0pt,fill=black] at (-8, 1.5) [label=left:2] (a2) {}; 
    \node[draw, circle,minimum size=4pt,inner sep=2pt, outer sep=0pt,fill=black] at (-8,3) [label=left:3]   (a3) {};   

 \node[draw, circle,minimum size=4pt,inner sep=2pt, outer sep=0pt,fill=black] at (-5, 0) [label=left:1] (b1) {}; 
    \node[draw, circle,minimum size=4pt,inner sep=2pt, outer sep=0pt,fill=black] at (-5, 1.5)[label=left:3]  (b2) {}; 
    \node[draw, circle,minimum size=4pt,inner sep=2pt, outer sep=0pt,fill=black] at (-5,3) [label=left:2]   (b3) {};

     \node[draw, circle,minimum size=4pt,inner sep=2pt, outer sep=0pt,fill=black] at (-2, 0) [label=left:3] (c1) {}; 
    \node[draw, circle,minimum size=4pt,inner sep=2pt, outer sep=0pt,fill=black] at (-2, 1.5)[label=left:1]  (c2) {}; 
    \node[draw, circle,minimum size=4pt,inner sep=2pt, outer sep=0pt,fill=black] at (-2,3) [label=left:2]   (c3) {};   

 \node[draw, circle,minimum size=4pt,inner sep=2pt, outer sep=0pt,fill=black] at (1, 0) [label=left:2]  (d1) {}; 
    \node[draw, circle,minimum size=4pt,inner sep=2pt, outer sep=0pt,fill=black] at (1, 1.5) [label=left:3]  (d2) {}; 
    \node[draw, circle,minimum size=4pt,inner sep=2pt, outer sep=0pt,fill=black] at (1,3) [label=left:1]   (d3) {};   
   
     \node[draw, circle,minimum size=4pt,inner sep=2pt, outer sep=0pt,fill=black] at (4, 0) [label=left:2]  (e1) {}; 
    \node[draw, circle,minimum size=4pt,inner sep=2pt, outer sep=0pt,fill=black] at (4, 1.5) [label=left:1]  (e2) {}; 
    \node[draw, circle,minimum size=4pt,inner sep=2pt, outer sep=0pt,fill=black] at (4,3) [label=left:3]   (e3) {};   

 \node[draw, circle,minimum size=4pt,inner sep=2pt, outer sep=0pt,fill=black] at (7, 0) [label=left:3] (f1) {}; 
    \node[draw, circle,minimum size=4pt,inner sep=2pt, outer sep=0pt,fill=black] at (7, 1.5) [label=left:2]  (f2) {}; 
    \node[draw, circle,minimum size=4pt,inner sep=2pt, outer sep=0pt,fill=black] at (7,3) [label=left:1]   (f3) {};

    \node[] at (-7.50,0)    (ineq1) {\textcolor{red}{\tiny{$\leq\! 1$}}}; 
    \node[] at (-7.50, 1.5)   (ineq2) {\textcolor{red}{\tiny{$\leq\!2$}}}; 
    \node[] at (-7.50,3)  (ineq3) {\textcolor{red}{\tiny{$\leq\! 3$}}};   

\node[] at (-4.50,0)    (ineq4) {\textcolor{red}{\tiny{$\leq\! 1$}}}; 
    \node[] at (-4.50, 1.5)   (ineq5) {\textcolor{red}{\tiny{$\leq\!2$}}}; 
    \node[] at (-4.5,3)  (ineq6) {\textcolor{red}{\tiny{$\leq\! 3$}}};

\node[] at (-1.50,0)    (ineq4) {\textcolor{red}{\tiny{$\leq\! 1$}}}; 
    \node[] at (-1.50, 1.5)   (ineq5) {\textcolor{red}{\tiny{$\leq\!1$}}}; 
    \node[] at (-1.5,3)  (ineq6) {\textcolor{red}{\tiny{$\leq\! 3$}}}; 

\node[] at (1.50,0)    (ineq1) {\textcolor{red}{\tiny{$\leq\! 0$}}}; 
    \node[] at (1.50, 1.5)   (ineq2) {\textcolor{red}{\tiny{$\leq\!1$}}}; 
    \node[] at (1.50,3)  (ineq3) {\textcolor{red}{\tiny{$\leq\! 1$}}};   

\node[] at (4.50,0)    (ineq4) {\textcolor{red}{\tiny{$\leq\! 0$}}}; 
    \node[] at (4.50, 1.5)   (ineq5) {\textcolor{red}{\tiny{$\leq\!1$}}}; 
    \node[] at (4.5,3)  (ineq6) {\textcolor{red}{\tiny{$\leq\! 3$}}};

\node[] at (7.70,0)    (ineq4) {\textcolor{red}{\tiny{$\leq\! -1$}}}; 
    \node[] at (7.50, 1.5)   (ineq5) {\textcolor{red}{\tiny{$\leq\!0$}}}; 
    \node[] at (7.5,3)  (ineq6) {\textcolor{red}{\tiny{$\leq\! 1$}}};

    \draw [black,thick]   (a1) to (a2);
    \draw [black,thick]   (a2) to (a3);
    \draw [black,thick]   (b1) to (b2);
    \draw [black,thick]   (b2) to (b3);
    \draw [black,thick]   (c1) to (c2);
    \draw [black,thick]   (c2) to (c3);
    \draw [black,thick]   (d1) to (d2);
    \draw [black,thick]   (d2) to (d3);
    \draw [black,thick]   (e1) to (e2);
    \draw [black,thick]   (e2) to (e3);
    \draw [black,thick]   (f1) to (f2);
    \draw [black,thick]   (f2) to (f3);

\draw[rounded corners,fill=blue,opacity=0.3] (-8.25, -0.25) rectangle (-7.75,0.25) {}; 
\draw[rounded corners,fill=blue,opacity=0.3] (-8.25, 1.25) rectangle (-7.75,1.75) {}; 
\draw[rounded corners,fill=blue,opacity=0.3] (-8.25, 2.75) rectangle (-7.75,3.25) {}; 

\draw[rounded corners,fill=blue,opacity=0.3] (-5.25, -0.25) rectangle (-4.75,0.25) {}; 
\draw[rounded corners,fill=blue,opacity=0.3] (-5.25, 1.25) rectangle (-4.75,1.75) {}; 
\draw[rounded corners,fill=blue,opacity=0.3] (-5.25, 2.75) rectangle (-4.75,3.25) {}; 

\draw[rounded corners,fill=blue,opacity=0.3] (-2.25, -0.25) rectangle (-1.75,1.75) {}; 
\draw[rounded corners,fill=blue,opacity=0.3] (-2.25, 2.75) rectangle (-1.75,3.25) {}; 

\draw[rounded corners,fill=blue,opacity=0.3] (0.75, -0.25) rectangle (1.25,0.25) {}; 
\draw[rounded corners,fill=blue,opacity=0.3] (0.75, 1.25) rectangle (1.25,3.25) {}; 

\draw[rounded corners,fill=blue,opacity=0.3] (3.75, -0.25) rectangle (4.25,0.25) {}; 
\draw[rounded corners,fill=blue,opacity=0.3] (3.75, 1.25) rectangle (4.25,1.75) {}; 
\draw[rounded corners,fill=blue,opacity=0.3] (3.75, 2.75) rectangle (4.25,3.25) {}; 

\draw[rounded corners,fill=blue,opacity=0.3] (6.75, -0.25) rectangle (7.25,0.25) {}; 
\draw[rounded corners,fill=blue,opacity=0.3] (6.75, 1.25) rectangle (7.25,1.75) {}; 
\draw[rounded corners,fill=blue,opacity=0.3] (6.75, 2.75) rectangle (7.25,3.25) {}; 

\end{tikzpicture}
\caption{The  linear orders corresponding to $G_D$ in Figure~\ref{fig:partial_dyck_2} and the restrictions $\bar{\rho}^{\p L_{\pi}}$.}
\label{fig:six_linear_orders}
\end{figure}
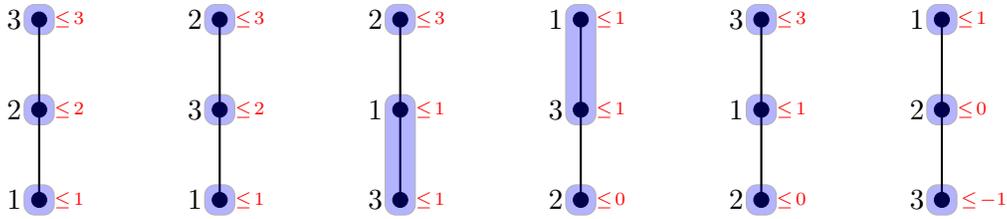

\begin{theorem}\label{thm:main_theorem}
The polynomial $\p X_D(\x;t)$ is slide-positive and we have 
\[
\p X_D(\x_r;t)=\sum_{\pi\in S_n}t^{\mathrm{inv}_G(\pi)}\slide{\mathrm{rdes}(\p L_{\pi},\rho)}(\x_r).
\]
\end{theorem}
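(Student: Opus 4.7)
The plan is to partition $\p X_D(\x_r;t)$ according to the unique acyclic orientation of $G$ compatible with each proper coloring, recognize the resulting sub-sums as restricted $(P,\omega)$-partition generating functions, and then refine further by linear extensions so that Lemma~\ref{lem:equals_single_slide} applies term by term.

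In detail, I would first verify that the labeling $\ombfo$ built by the algorithm above turns every cover relation of $P_{\bfo}$ into an $\omega$-descent: a cover $j\precdot_{P_{\bfo}}i$ corresponds to a directed edge $i\to j$ of $\bfo$, so $j$ cannot be a source while $i$ is still present, forcing the algorithm to label $i$ before $j$ and hence $\ombfo(i)<\ombfo(j)$. Under this labeling the two defining conditions of a $(P_{\bfo},\ombfo)$-partition collapse to a single strict inequality across each cover, which propagates transitively and matches compatibility of a proper coloring with $\bfo$. Since every proper coloring is compatible with exactly one $\bfo$, partitioning $\rho$-restricted proper colorings by their induced orientation yields
\[
\p X_D(\x_r;t)=\sum_{\bfo}t^{\des_G(\bfo)}\p F_{(P_{\bfo},\ombfo,\rho)},
\]
where $\bfo$ ranges over acyclic orientations of $G$ and $\des_G(\bfo)$ is the common value of $\des_G(f)$ over compatible colorings.

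Next, I would invoke the standard decomposition of $(P,\omega)$-partitions into linear extensions, adapted to the restricted setting of Assaf--Bergeron~\cite{Assaf-Bergeron}, to write
\[
\p F_{(P_{\bfo},\ombfo,\rho)}=\sum_{\pi:\,\bfo_{\pi}=\bfo}\p F_{(\p L_{\pi},\ombfo,\rho)}=\sum_{\pi:\,\bfo_{\pi}=\bfo}\slide{\mathrm{rdes}(\p L_{\pi},\rho)}(\x_r),
\]
with the second equality being Lemma~\ref{lem:equals_single_slide}. Combining the two displays with the identity $\mathrm{inv}_G(\pi)=\des_G(\bfo_{\pi})$---each $G$-inversion of $\pi$ is an edge of $\bfo_{\pi}$ directed from a smaller vertex to a larger one, which is exactly a descent of any compatible coloring---and summing over $\bfo$ produces the claimed expansion.

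The principal obstacle is the identification underlying the first step: one must show that the proper colorings compatible with $\bfo$ are exactly $\mathcal{A}(P_{\bfo},\ombfo,\rho)$, which hinges on verifying that the specific source-removal rule produces a bijection making \emph{every} cover of $P_{\bfo}$ an $\omega$-descent. Once that foundation is in place the remainder---the linear-extension decomposition, the single-slide reduction from Lemma~\ref{lem:equals_single_slide}, and the routine bookkeeping identification $\mathrm{inv}_G(\pi)=\des_G(\bfo_{\pi})$---follows cleanly.
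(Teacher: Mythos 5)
Your proposal is correct and follows essentially the same route as the paper's proof: partition the $\rho$-restricted proper colorings by their compatible acyclic orientation, identify each piece with $\p F_{(P_{\bfo},\ombfo,\rho)}$ via the labeling $\ombfo$, decompose over linear extensions using the Assaf--Bergeron result, and apply Lemma~\ref{lem:equals_single_slide} together with the bookkeeping identity matching the $t$-exponent to $\mathrm{inv}_G(\pi)$. The only difference is cosmetic: you spell out why the source-removal labeling makes every cover of $P_{\bfo}$ an $\omega$-descent, a point the paper asserts without elaboration.
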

\begin{proof}
Throughout this proof, denote by $\mathrm{PC}(G,\rho)$ the set of proper colorings $f:[n]\to \mathbb{Z}_{>0}$ of $G$ that satisfy $f(i)\leq \rho(i)$ for all $i\in [n]$.
Let $ \mathrm{AO}(G)$ be the set of acyclic orientations of $G$.
We have that
\begin{align}\label{eqn:begin}
\p X_{D}(\x_r;t)&=\sum_{f\in \mathrm{PC}(G,\rho)}t^{\des_G(f)}x_{f(1)}\cdots x_{f(n)}\nonumber\\
&=\sum_{\bfo \in \mathrm{AO}(G)}t^{|\{i\to j \text{ in } \bfo \suchthat i<j\}|}\sum_{\substack{f\in \mathrm{PC}(G,\rho)\\f \text{ compatible with } \bfo}}x_{f(1)}\dots x_{f(n)}\nonumber \\
&=\sum_{\bfo \in \mathrm{AO}(G)}t^{|\{j \prec_{P_{\bfo}}i\suchthat i<j, \{i,j\}\in E\}|}\p F_{(P_{\bfo},\ombfo,\rho)}.
\end{align}
Let $\mc{L}(P_{\bfo},\ombfo,\rho)$ denote the set of linear extensions of $P_{\bfo}$ along with the inherited labeling $\ombfo$ and restriction $\rho$.
By \cite[Corollary 3.15]{Assaf-Bergeron}, we have that $\p F_{(P_{\bfo},\ombfo,\rho)}$ may be written as a sum over elements of $\mc{L}(P_{\bfo},\ombfo,\rho)$:
\begin{align}\label{eqn:sum_over_acyclic}
\p X_D(\x_r;t)=\sum_{\bfo \in \mathrm{AO}(G)}t^{|\{j \prec_{P_{\bfo}}i\suchthat i<j, \{i,j\}\in E\}|}\sum_{\p L \in \mc{L}(P_{\bfo},\ombfo,\rho
)} \p F_{(\p L,\ombfo,\rho)}.
\end{align}

As described before, any linear order $\p L$ on the vertices of $G$, say $\pi(1)\precdot_{\p L}\cdots \precdot_{\p L}\pi(n)$, induces a unique acyclic orientation $\bfo$, in addition to uniquely determining  $\ombfo$.
This allows us to rewrite the sum on the right hand side of ~\eqref{eqn:sum_over_acyclic} as ranging over  linear orders on the vertices of $G$, or equivalently, permutations on $[n]$.
In this context, it is easily checked that
$t^{|\{j \prec_{P_{\bfo}}i\suchthat i<j, \{i,j\}\in E\}|}$ is $t^{\mathrm{inv}_G(\pi)}$. 
Lemma~\ref{lem:equals_single_slide} implies that we can replace $\p F_{(\p L,\ombfo,\rho)}$ in \eqref{eqn:sum_over_acyclic} with $\slide{\mathrm{rdes}(\p L_{\pi},\rho)}(\x_r)$.
The claim now follows.
\end{proof}
For the partial Dyck path $D\in P_{3,3}$ in Figure~\ref{fig:partial_dyck_2}, we have the following expansion:
\begin{align}
	\p X_D(\x_3;t)=\slide{111}(\x_3)+t\slide{111}(\x_3)+t\slide{201}(\x_3)+t\slide{1\vbar 2}(\x_3)+t\slide{1\vbar 101}(\x_3)+t^2\slide{11\vbar 1}(\x_3).
\end{align}
In writing our weak compositions we have omitted commas and parentheses. Recall also that the  vertical bar separates the positively indexed parts from the rest.
We proceed to address the question of how the expansion in terms of slides in the context of Dyck graphs relates to the expansion for chromatic quasisymmetric functions in terms of fundamental quasisymmetric functions.
The contents of the next subsection are heavily inspired by the recent work of Lam-Lee-Shimozono \cite{Lam-Lee-Shimozono}.

\subsection{The stable limit}\label{subsec:backstable}

For this subsection, let $\x$ denote the set of commuting indeterminates  $\{x_i\suchthat i\in \mathbb{Z}\}$ endowed with the total order $x_j<x_{j+1}$ for all $j\in \mathbb{Z}$. 
Furthermore, set $\x_{\leq r}\coloneqq \{x_i \suchthat i\leq r\}$. 
In particular, $\x_{-}\coloneqq \x_{\leq 0}$.
For ${\bf a}\in S_{\mathbb{Z}}$ we obtain a well-defined monomial 
\[
\x^{\bf a}=\prod_{i\in \mathbb{Z}}x_i^{a_i}.
\]
Let $R$ be the $\mathbb{Q}$-algebra of formal power series $f$
in the variables $x_i$
for $i\in \mathbb{Z}$ such that  $f$ has bounded total degree and there is an
$N\in \mathbb{Z}$ such that the variables $x_i$ do not appear in $f$ for $i > N$.
We say that $f$ is \emph{back-quasisymmetric} if there exists a $b\in \mathbb{Z}$ such that for any two sequences $i_1<\cdots<i_k\leq b$ and $j_1<\cdots<j_k\leq b$ and any strong composition $\alpha=(\alpha_1,\dots,\alpha_k)$, we have that the coefficient of 
$x_{i_1}^{\alpha_1}\dots x_{i_k}^{\alpha_k}$ in $f$ equals that  of 
$x_{j_1}^{\alpha_1}\dots x_{j_k}^{\alpha_k}$.
Let $\overleftarrow{R}^{\mathrm{Qsym}}$ denote the subset of back-quasisymmetric elements of $R$.

For ${\bf a}\in S_{\mathbb{Z}}$, consider the  element $\backslide{{\bf a}}(\x)\in R$ defined as
\begin{align}\label{eqn:backstable_slide}
\backslide{{\bf a}}(\x)=\sum_{{\bf b}\in\p C_{\leq {\bf a}}}\x^{\bf b}.
\end{align}
The reader should compare the definition of $\backslide{{\bf a}}(\x)$ with that of slide polynomials in \eqref{eqn:slides}. 
Replacing the indexing set $\p C_{\leq {\bf a}}^{(r)}$ by $\p C_{\leq {\bf a}}$ allows us to obtain a formal power series rather than a polynomial.
It is clear that $\backslide{{\bf a}}(\x)\in\overleftarrow{R}^{\mathrm{Qsym}}$, and we refer to it as the `backstable' limit of the slide polynomial $\backslide{{\bf a}}(\x_r)$ where $r$ is any integer such that ${\bf a}\in S_{\mathbb{Z}}^r$. 

At this stage, it should be clear how to define a backstable analogue of the chromatic nonsymmetric polynomial.
Indeed, instead of partial Dyck paths starting at the coordinates $(0,r)$, we consider \emph{infinite} partial Dyck paths $D$ that begin at $(-\infty,r)$ and take only east steps till $(0,r)$.
Then define the backstable limit $\overleftarrow{\p X}_{D}(\x;t)$ as  follows (cf. equation~\eqref{eqn:chromatic_nonsymmetric_polynomial}).
\begin{align}\label{eqn:backstable_chromatic}
\overleftarrow{\p X}_{D}(\x;t)=
\sum_{\substack{\text{proper colorings }f:[n]\to \mathbb{Z}\\f(i)\leq \rho_D(i)}}t^{\des_G(f)}x_{f(1)}\cdots x_{f(n)}
\end{align}
Note that the only deviation from the definition of the chromatic nonsymmetric polynomial is that we are allowing `nonpositive colors'. More importantly though, note that the negative colors do not play a role in determining the restriction map $\rho\coloneqq \rho_D$.
It follows from definition that $\overleftarrow{\p X}_{D}(\x;t)$ is back-quasisymmetric.
In fact, one can replace each slide polynomial $\slide{\mathrm{rdes}(\p L_{\pi},\rho)}$ appearing in the expansion of $\p X_D(\x_r;t)$ in Theorem~\ref{thm:main_theorem} by $\backslide{\mathrm{rdes}(\p L_{\pi},\rho)}(\x)$ and thus obtain the following theorem.

\begin{theorem}
\label{thm:backstable_chromatic_into_slides}
The formal power series $\overleftarrow{\p X}_D(\x;t)$ is backstable slide-positive and we have the expansion
\[
\overleftarrow{\p X}_D(\x;t)=\sum_{\pi\in S_n}t^{\mathrm{inv}_G(\pi)}\backslide{\mathrm{rdes}(\p L_{\pi},\rho)}(\x).
\]
\end{theorem}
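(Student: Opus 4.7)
The plan is to mirror the proof of Theorem~\ref{thm:main_theorem} almost verbatim, substituting the backstable slide polynomials for their polynomial counterparts at the end. The one piece of new machinery needed is a backstable analogue of Proposition~\ref{prop:single_slide} (equivalently, of Lemma~\ref{lem:equals_single_slide}), and I expect this to be the main technical step.

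First I would partition the proper colorings $f:[n]\to\mathbb{Z}$ satisfying $f(i)\leq\rho_D(i)$ according to the unique acyclic orientation $\bfo\in\mathrm{AO}(G)$ with which they are compatible. Writing $P_\bfo$ and $\ombfo$ as before (noting that neither construction uses the codomain of $f$), and defining
\[
\overleftarrow{\p F}_{(P_\bfo,\ombfo,\rho)} \coloneqq \sum_{\substack{f\text{ compatible with }\bfo\\ f(i)\leq \rho(i)}} \prod_{i} x_{f(i)},
\]
where $f$ now ranges over maps $[n]\to\mathbb{Z}$, one obtains the analogue of \eqref{eqn:begin}--\eqref{eqn:sum_over_acyclic} in which the $t$-weight on each $\bfo$ is again $t^{\mathrm{inv}_G(\pi)}$ after reindexing by linear extensions. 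The decomposition of $\overleftarrow{\p F}_{(P_\bfo,\ombfo,\rho)}$ along linear extensions (\cite[Corollary 3.15]{Assaf-Bergeron}) carries over without change, since the partition of colorings by the linear extension recording the sorted order of $f$-values is independent of whether negative colors are permitted.

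The main technical step is to establish the backstable analogue of Lemma~\ref{lem:equals_single_slide}: for every permutation $\pi\in S_n$,
\[
\overleftarrow{\p F}_{(\p L_\pi,\ombfo,\rho)} \;=\; \backslide{\mathrm{rdes}(\p L_\pi,\rho)}(\x).
\]
I would prove this by the same bijection underlying Proposition~\ref{prop:single_slide}: a $\rho$-restricted $(\p L_\pi,\ombfo)$-partition $f$ with values in $\mathbb{Z}$ is equivalent to a composition ${\bf b}=(b_i)_{i\in\mathbb{Z}}$ recording the content of $f$, and the analysis of the previous section, together with Lemma~\ref{lem:key_lemma} and the recursive formula~\eqref{eqn:barrho_part2}, identifies the set of admissible ${\bf b}$ with $\p C_{\leq \mathrm{rdes}(\p L_\pi,\rho)}$. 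The crucial observation, already made in the proof of Lemma~\ref{lem:equals_single_slide}, is that $\pi(i)\succ_{P_D}\pi(i+1)$ forces $\bar{\rho}^{\p L_\pi}(\pi(i))=\bar{\rho}^{\p L_\pi}(\pi(i+1))$; this single-slide condition does not depend on whether the codomain is $\mathbb{Z}_{>0}$ or $\mathbb{Z}$, so the same reduction used to deduce Lemma~\ref{lem:equals_single_slide} from Proposition~\ref{prop:single_slide} applies here, once one replaces $\p C_{\leq{\bf a}}^{(r)}$ by $\p C_{\leq{\bf a}}$ in \eqref{eqn:slides}.

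The main obstacle is being careful about the domain of $\bar{\rho}^{\p L_\pi}$ now that the composition ${\bf b}$ is allowed to have support extending arbitrarily far into the negative integers. I would handle this by observing that $\bar{\rho}^{\p L_\pi}$ depends only on $\rho$ and the descent structure of $\ombfo\circ\pi$, so it is unaffected by the extension of the alphabet to $\x$; consequently the recursion \eqref{eqn:barrho_part2} gives the same sequence of thresholds, and these thresholds correctly cut off ${\bf b}\in\p C_{\leq \mathrm{rdes}(\p L_\pi,\rho)}$ from above while placing no constraint from below. Combining this backstable single-slide lemma with the orientation-and-linear-extension decomposition yields the stated expansion, and back-quasisymmetry of the right-hand side then follows from the fact that each $\backslide{{\bf a}}(\x)$ lies in $\overleftarrow{R}^{\mathrm{Qsym}}$.
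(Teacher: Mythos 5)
Your proposal is correct and follows the same route the paper intends: the paper's "proof" of this theorem is the single sentence that one replaces each $\slide{\mathrm{rdes}(\p L_{\pi},\rho)}(\x_r)$ in Theorem~\ref{thm:main_theorem} by $\backslide{\mathrm{rdes}(\p L_{\pi},\rho)}(\x)$, and your write-up just makes explicit the steps this relies on (the orientation/linear-extension decomposition over colorings $f:[n]\to\mathbb{Z}$ and the backstable single-slide lemma, whose hypothesis is unaffected by enlarging the codomain). Your identification of the admissible content vectors with $\p C_{\leq \mathrm{rdes}(\p L_{\pi},\rho)}$ rather than $\p C_{\leq \mathrm{rdes}(\p L_{\pi},\rho)}^{(r)}$ is exactly the point, and is handled correctly.
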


We now describe how to recover the expansion of the chromatic quasisymmetric function in terms of fundamental quasisymmetric functions \cite[Theorem 3.1]{ShWa}  from Theorem~\ref{thm:backstable_chromatic_into_slides}.
Following \cite[Section 3.4]{Lam-Lee-Shimozono}, consider the map $\eta_0$ on $\overleftarrow{R}^{\mathrm{Qsym}}$ that sets $x_i=0$ for $i>0$.
Clearly, $\eta_0(\overleftarrow{\p X}_D(\x;t))=X_D(\x_{-};t)$, where we have abused notation to denote the chromatic quasisymmetric function of $G_D$ by $X_D$.
Thus to understand the expansion in terms of fundamental quasisymmetric functions, it suffices to understand $\eta_0(\backslide{\mathrm{rdes}(\p L_{\pi},\rho)}(\x))$.

To this end, we introduce some notation and establish a general result.
Let $\mathrm{QSym}[\x_{-}]$ denote the ring of quasisymmetric functions in the variables $\{x_i\suchthat i\leq 0\}$. 
Given a strong composition $\alpha$, we denote by $F_{\alpha}(\x_{-})$ the \emph{fundamental quasisymmetric function} indexed by $\alpha$ in the ordered alphabet $\x_{-}$.
Given strong compositions $\alpha=(\alpha_1,\dots,\alpha_{\ell})$ and $\beta=(\beta_1,\dots,\beta_m)$, let $\alpha\!\cdot\! \beta\coloneqq (\alpha_1,\dots,\alpha_{\ell},\beta_1,\dots,\beta_m)$ denote \emph{concatenation} and $\alpha\!\odot\! \beta \coloneqq (\alpha_1,\dots,\alpha_{\ell}+\beta_1,\dots,\beta_m)$ denote \emph{near-concatenation}.
For instance, if $\alpha=(2,1,3)$ and $\beta=(1,2)$, then $\alpha\!\cdot\!\beta=(2,1,3,1,2)$ and $\alpha\!\odot\!\beta=(2,1,4,2)$.

Returning to our interest in understanding $\eta_0(\backslide{\mathrm{rdes}(\p L_{\pi},\rho)}(\x))$, recall that $\mathrm{rdes}(\p L_{\pi},\rho)$ belongs to $S_{\mathbb{Z}}^r$. 
Furthermore, by the recursive definition of $\bar{\rho}^{\p L_{\pi}}$, we can show that $\mathrm{rdes}(\p L_{\pi},\rho)$ possesses the following crucial property: if $\mathrm{rdes}(\p L_{\pi},\rho)_i>0$ for some $i\leq 0$, then for all $i\leq j\leq 0$, we have $\mathrm{rdes}(\p L_{\pi},\rho)_j>0$.
We refer to compositions with this property as \emph{tail-strong compositions}. 
Given a tail-strong composition ${\bf a}\in S_{\mathbb{Z}}^r$, our next lemma expresses $\backslide{{\bf a}}(\x)$  explicitly as an element of $\mathrm{QSym}[\x_{-}]\otimes \mathbb{Q}[x_1,\dots,x_r]$.
The proof of this lemma is simply a matter of unraveling the definitions and hence is omitted.
\begin{lemma}\label{lem:backstable_tail_strong}
Let ${\bf a}\in S_{\mathbb{Z}}^r $ be tail-strong.
Let $\alpha=\flatten{(\dots,a_{-2},a_{-1},a_0)}$ and ${\bf a}_{+}=(a_1,\dots,a_r)$.
A decomposition of $\flatten{{\bf a}^{+}}$ as $\gamma\odot \delta$ or $\gamma\cdot \delta$ naturally gives us  two sequences ${\bf a}_{+}^{\gamma}=(a_1^{\gamma},\dots,a_{r}^{\gamma})$ and ${\bf a}_{+}^{\delta}=(a_1^{\delta},\dots,a_{r}^{\delta})$ whose component-wise sum is ${\bf a}_{+}$.
We have the following expansion:
\[
\backslide{{\bf a}}(\x)=\sum_{\gamma\odot \delta=\flatten{{\bf a}_{+}} \text{ or } \gamma\cdot \delta=\flatten{{\bf a}_{+}}} F_{\alpha\cdot\gamma}(\x_{-})\slide{{\bf a}_{+}^{\delta}}(\x_r).
\]

\end{lemma}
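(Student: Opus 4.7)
The plan is to verify the identity by explicit termwise comparison, leveraging the natural split of any weak composition into nonpositive- and positive-indexed parts.

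For each $\mathbf{b} \in \p C_{\leq \mathbf{a}}$, I would write $\mathbf{b} = (\mathbf{b}_-, \mathbf{b}_+)$ where $\mathbf{b}_-$ is supported in $(-\infty, 0]$ and $\mathbf{b}_+$ in $[1, r]$; the rev-lex constraint rules out support beyond $r$ since $\mathbf{a} \in S_{\mathbb{Z}}^r$. Then $\mathbf{x}^{\mathbf{b}} = \mathbf{x}_-^{\mathbf{b}_-} \cdot \mathbf{x}_r^{\mathbf{b}_+}$ and $\flatten{\mathbf{b}} = \flatten{\mathbf{b}_-} \cdot \flatten{\mathbf{b}_+}$. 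Since $\flatten{\mathbf{b}}$ refines $\flatten{\mathbf{a}} = \alpha \cdot \flatten{\mathbf{a}_+}$, I partition the parts of $\flatten{\mathbf{b}}$ into contiguous blocks summing to the successive parts of $\flatten{\mathbf{a}}$, and identify where the boundary between $\flatten{\mathbf{b}_-}$ and $\flatten{\mathbf{b}_+}$ falls relative to these blocks.

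The key structural observation is that the tail-strong hypothesis on $\mathbf{a}$ forces this boundary to lie in the $\flatten{\mathbf{a}_+}$-portion of the refinement, never within $\alpha$. When the boundary aligns with a block boundary, the split reads off a concatenation decomposition $\gamma \cdot \delta = \flatten{\mathbf{a}_+}$; when it falls strictly inside a block, one reads off a near-concatenation $\gamma \odot \delta = \flatten{\mathbf{a}_+}$. In either case $\flatten{\mathbf{b}_-}$ refines $\alpha \cdot \gamma$ and $\flatten{\mathbf{b}_+}$ refines $\delta$, with $\mathbf{b}_+ \in \p C_{\leq \mathbf{a}_+^{\delta}}^{(r)}$. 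Having established this bijection, I would fix a decomposition $(\gamma, \delta)$ and sum the corresponding monomials: the $\mathbf{b}_-$-dependent sum recovers $F_{\alpha \cdot \gamma}(\mathbf{x}_-)$ directly from the definition of the fundamental quasisymmetric function, while the $\mathbf{b}_+$-dependent sum recovers $\slide{\mathbf{a}_+^{\delta}}(\mathbf{x}_r)$ from the definition of the slide polynomial.

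For the converse direction, I would check that each pair $(\mathbf{b}_-, \mathbf{b}_+)$ drawn from the expansions of these factors reassembles into a valid $\mathbf{b} \in \p C_{\leq \mathbf{a}}$. The refinement property follows automatically because $(\alpha \cdot \gamma) \cdot \delta$ refines $\alpha \cdot \flatten{\mathbf{a}_+}$ (using $\gamma \cdot \delta$ coarsens to $\flatten{\mathbf{a}_+}$ in the near-concatenation case). For rev-lex: when $\gamma$ is nonempty one has $\mathbf{a}_+^{\delta} <_{\mathrm{rlex}} \mathbf{a}_+$ strictly, so $\mathbf{b}_+ \leq_{\mathrm{rlex}} \mathbf{a}_+^{\delta}$ yields $\mathbf{b} <_{\mathrm{rlex}} \mathbf{a}$ regardless of $\mathbf{b}_-$, and when $\gamma$ is empty the tail-strong structure propagates the inequality down to the nonpositive block, using that $a_0 > 0$ pins the rightmost nonzero of $\mathbf{a}_-$.

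The main obstacle is verifying that the boundary cannot land in the $\alpha$-portion. This is where tail-strong is essential: because $a_0 > 0$ (when $\alpha$ is nonempty) and $\alpha$'s support is contiguous and ends at position $0$, any weight pulled out of $\alpha$ into positive positions of $\mathbf{b}$ forces $\flatten{\mathbf{b}_+}$ to have strictly more parts than $\flatten{\mathbf{a}_+}$, while still fitting in $[1,r]$. A pigeonhole argument on the positions then produces an index $K \in [1,r]$ with $b_K > 0 = a_K$ and $b_j = a_j$ for all $j > K$, contradicting $\mathbf{b} \leq_{c} \mathbf{a}$. Handling this case analysis cleanly—in particular, tracking which decomposition each $\mathbf{b}$ corresponds to when the block boundary interacts with the $\flatten{\mathbf{b}_-}/\flatten{\mathbf{b}_+}$ split—is the only real bookkeeping, after which the two sides are seen to agree monomial by monomial.
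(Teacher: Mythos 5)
Your overall architecture---splitting each $\mathbf{b}\in\p C_{\leq \mathbf{a}}$ into a nonpositive part $\mathbf{b}_-$ and a positive part $\mathbf{b}_+$, locating the boundary within the block decomposition of $\flatten{\mathbf{b}}$ over $\flatten{\mathbf{a}}=\alpha\cdot\flatten{\mathbf{a}_+}$, and reading off $(\gamma,\delta)$ accordingly---is the intended one (the paper omits the proof as routine). However, the step you yourself single out as the main obstacle is handled by an argument that fails, and the tail-strong hypothesis is invoked in the wrong place.

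On the exclusion step: take $\mathbf{a}=(1\vbar 0,0,2)\in S_{\mathbb{Z}}^{3}$, which is tail-strong, and $\mathbf{b}=(0\vbar 1,2,0)$. Here weight has been pulled out of $\alpha=(1)$ into positive positions and $\flatten{\mathbf{b}_+}=(1,2)$ indeed has more parts than $\flatten{\mathbf{a}_+}=(2)$, yet there is \emph{no} index $K\in[1,3]$ with $b_K>0=a_K$ and $b_j=a_j$ for all $j>K$: for $K=1$ one has $b_2\neq a_2$, and for $K=2$ one has $b_3\neq a_3$. So your pigeonhole contradiction never materializes, even though this $\mathbf{b}$ must be excluded (its monomial $x_1x_2^2$ occurs in none of the products $F_{(1)}(\x_-)\slide{(0,0,2)}$, $F_{(1,1)}(\x_-)\slide{(0,0,1)}$, $F_{(1,2)}(\x_-)$ on the right-hand side). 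The exclusion actually comes from the comparison built into $\leq_{\mathrm{c}}$ evaluated at index $0$: in the partial-sum (dominance) form of the Assaf--Searles order, $\mathbf{b}\leq_{\mathrm{c}}\mathbf{a}$ forces $\sum_{i\leq 0}b_i\geq\sum_{i\leq 0}a_i$, which is the weight of $\alpha$, and this alone places the entire $\alpha$-portion of the refinement inside $\flatten{\mathbf{b}_-}$. Note this uses no tail-strongness, and also that under a literal ``rightmost differing entry is smaller'' reading of reverse lex the $\mathbf{b}$ above would \emph{not} be excluded and the lemma would be false; your argument therefore needs to commit to the partial-sum form of the comparison.

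On tail-strongness: its real job is in the reassembly direction, and for \emph{every} $\gamma$, not only $\gamma=\varnothing$ as in your sketch. You must show that every placement of a refinement of $\alpha\cdot\gamma$ at nonpositive positions---that is, every monomial of $F_{\alpha\cdot\gamma}(\x_-)$, which carries no dominance restriction whatsoever---satisfies $\sum_{i\leq k}b_i\geq\sum_{i\leq k}a_i$ for all $k\leq 0$. This holds precisely because tail-strongness right-justifies the nonpositive support of $\mathbf{a}$ at $0$, so that $\sum_{i\leq -j}a_i=\alpha_1+\cdots+\alpha_{\ell(\alpha)-j}$, while the at most $j$ parts of $\flatten{\mathbf{b}_-}$ sitting in positions $-j+1,\dots,0$ form a suffix summing to at most $\alpha_{\ell(\alpha)-j+1}+\cdots+\alpha_{\ell(\alpha)}$ plus the weight of $\gamma$. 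For a non-tail-strong $\mathbf{a}$ such as $(1,0\vbar\cdots)$ the identity fails exactly at this check. With the index-$0$ partial-sum argument replacing the pigeonhole, and this verification supplied for all $\gamma$ and all $k<0$, your proof closes; as written, it does not.
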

We illustrate the content of the preceding theorem with an example.
Let ${\bf a}\in S_{\mathbb{Z}}^4$ be $(1,2 \vbar 0,2,0,1)$.
Then ${\bf a}_{+}=(0,2,0,1)$ and $\alpha=(1,2)$.
The decomposition of $\flatten{{\bf a}_{+}}=(2,1)$ as $\gamma\odot \delta $ where $\gamma=(1)$ and $\delta=(1,1)$ yields ${\bf a}_{+}^{\gamma}=(0,1,0,0)$ and ${\bf a}_{+}^{\delta}=(0,1,0,1)$.
This decomposition contributes $F_{(1,2,1)}(\x_{-})\slide{(0,1,0,1)}(x_1,\dots,x_4)$ to $\backslide{{\bf a}}(\x)$.
By Lemma~\ref{lem:backstable_tail_strong}, the complete expansion is 
\begin{align}
\backslide{{\bf a}}(\x)=F_{12}(\x_{-})\slide{0201}(\x_4)+F_{121}(\x_{-})\slide{0101}(\x_4)+F_{122}(\x_{-})\slide{0001}(\x_4)+F_{1221}(\x_{-}).
\end{align}

From Lemma~\ref{lem:backstable_tail_strong}, we see that $\eta_0(\backslide{{\bf a}}(\x))=F_{\flatten{\bf a}}(\x_{-})$ for any tail-strong composition.
Applying $\eta_0$ to both sides of the expansion in Theorem~\ref{thm:backstable_chromatic_into_slides}, we obtain
\begin{align}\label{eqn:chromatic_into_F}
X_{D}(\x_{-};t)=\sum_{\pi\in S_n}t^{\mathrm{inv}_G(\pi)}F_{\flatten{{\mathrm{rdes}(\p L_{\pi},\rho)}}}(\x_{-})
\end{align}
Clearly, $\flatten{{\mathrm{rdes}(\p L_{\pi},\rho)}}$ only depends on $\pi$, and from its definition we can show that it equals the strong composition of $n$ corresponding to the $[n-1]\setminus\mathrm{Des}_{P_D}(\pi)$; see Remark~\ref{rem:why_rdes}.
Thus, $\flatten{{\mathrm{rdes}\left(\p L_{\pi},\rho\right)}}=\mathrm{comp}\left([n-1]\setminus\mathrm{Des}_{P_D}(\pi) \right) =\mathrm{comp}\left(\mathrm{Des}_{P_D}(\pi)\right)^t$,  where $\alpha^t$ denotes the strong composition obtained by reflecting the ribbon diagram (in French notation) representing $\alpha$ across the line $y=x$.
In summary, we obtain the following result that the reader should compare to \cite[Theorem 3.1]{ShWa}.
\begin{corollary}
The chromatic quasisymmetric function corresponding to Dyck graphs has the following expansion in the basis of fundamental quasisymmetric functions:
\[
X_{D}(\x_{-};t)=\sum_{\pi\in S_n}t^{\mathrm{inv}_G(\pi)}F_{\mathrm{comp}(\mathrm{Des}_{P_D}(\pi))^t}(\x_{-}).
\]
\end{corollary}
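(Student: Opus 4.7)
The plan is to derive this corollary as a straightforward consequence of Theorem~\ref{thm:backstable_chromatic_into_slides}, the map $\eta_0$, and Lemma~\ref{lem:backstable_tail_strong}. First, I would apply $\eta_0$ to both sides of the identity
\[
\overleftarrow{\p X}_D(\x;t)=\sum_{\pi\in S_n}t^{\mathrm{inv}_G(\pi)}\backslide{\mathrm{rdes}(\p L_{\pi},\rho)}(\x).
\]
The left-hand side becomes $X_{D}(\x_{-};t)$, since setting $x_i=0$ for $i>0$ in the backstable generating function~\eqref{eqn:backstable_chromatic} restricts the sum to colorings with values in $\mathbb{Z}_{\leq 0}$, which recovers the usual chromatic quasisymmetric function in the alphabet $\x_{-}$.

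Next, I would verify that each composition $\mathrm{rdes}(\p L_{\pi},\rho)$ is tail-strong, so that Lemma~\ref{lem:backstable_tail_strong} applies. This follows from the recursive definition~\eqref{eqn:barrho_part2}: in computing $\bar{\rho}^{\p L_{\pi}}$ top-down, the only way a chain $C_j$ placed at some nonpositive index can arise is when the restrictions force the minimum value $c_j$ to drop below $1$, and once $c_j\leq 0$ all subsequent $c_{j+1},\ldots$ must also be nonpositive by monotonicity; hence all nonzero parts at nonpositive indices appear consecutively ending at index $0$. With the tail-strong property in hand, Lemma~\ref{lem:backstable_tail_strong} writes $\backslide{{\bf a}}(\x)$ as a sum of products $F_{\alpha\cdot\gamma}(\x_{-})\,\slide{{\bf a}_{+}^{\delta}}(\x_r)$. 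Applying $\eta_0$ kills every term except the one in which $\delta=\varnothing$ and $\gamma=\flatten{{\bf a}_{+}}$, yielding
\[
\eta_0(\backslide{{\bf a}}(\x))=F_{\flatten{{\bf a}}}(\x_{-}).
\]

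Substituting this into the image under $\eta_0$ of Theorem~\ref{thm:backstable_chromatic_into_slides} gives the identity~\eqref{eqn:chromatic_into_F}. The final step is the combinatorial identification of the indexing strong composition. By Remark~\ref{rem:why_rdes} and Lemma~\ref{lem:key_lemma}, the positions of zero parts in $\mathrm{rdes}(\p L_{\pi},\rho)$ correspond to ascents of $\ombfo\circ\pi$, which in turn correspond bijectively to positions $i\in[n-1]$ with $\pi(i)\succ_{P_D}\pi(i+1)$, that is, to elements of $\mathrm{Des}_{P_D}(\pi)$. Hence $\flatten{\mathrm{rdes}(\p L_{\pi},\rho)}$ is the strong composition of $n$ associated with the set $[n-1]\setminus\mathrm{Des}_{P_D}(\pi)$, which equals $\mathrm{comp}(\mathrm{Des}_{P_D}(\pi))^t$ under the usual ribbon transpose operation.

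The main obstacle is the bookkeeping in the last step: one must carefully reconcile the conventions \textemdash{} descents versus ascents, subsets versus their complements, and the ribbon transpose convention \textemdash{} to confirm that $\flatten{\mathrm{rdes}(\p L_{\pi},\rho)} = \mathrm{comp}(\mathrm{Des}_{P_D}(\pi))^t$ rather than some variant such as $\mathrm{comp}(\mathrm{Des}_{P_D}(\pi))$ or its reverse. Everything else is a direct assembly of Theorem~\ref{thm:backstable_chromatic_into_slides}, Lemma~\ref{lem:backstable_tail_strong}, and the definition of $\eta_0$.
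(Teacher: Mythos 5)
Your proposal is correct and takes essentially the same route as the paper: apply $\eta_0$ to Theorem~\ref{thm:backstable_chromatic_into_slides}, use the tail-strong property of $\mathrm{rdes}(\p L_{\pi},\rho)$ together with Lemma~\ref{lem:backstable_tail_strong} to conclude $\eta_0(\backslide{{\bf a}}(\x))=F_{\flatten{{\bf a}}}(\x_{-})$, and then identify $\flatten{\mathrm{rdes}(\p L_{\pi},\rho)}=\mathrm{comp}([n-1]\setminus\mathrm{Des}_{P_D}(\pi))=\mathrm{comp}(\mathrm{Des}_{P_D}(\pi))^t$ via Lemma~\ref{lem:key_lemma} and Remark~\ref{rem:why_rdes}. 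The only quibble is phrasing: the breaks between nonzero parts of $\mathrm{rdes}(\p L_{\pi},\rho)$ occur at the \emph{descents} of $\ombfo\circ\pi$ (equivalently, the complement of $\mathrm{Des}_{P_D}(\pi)$), not at its ascents, but your final identification of the indexing composition is the correct one.
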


\section{Further remarks}\label{sec:final}
We conclude our article with some additional remarks.
\begin{enumerate}
\item 
The precise definition of $\p X_D(\x_r; t)$ is motivated by Carlsson and Mellit's proof of the Shuffle Conjecture \cite{Carlsson-Mellit}, in which the authors define and study the ``characteristic function'' of a partial Dyck path. This characteristic function contains a symmetric and a nonsymmetric component; the nonsymmetric component is equivalent to our chromatic nonsymmetric polynomial. Carlsson and Mellit briefly consider the case where the first $r$ entries receive a different labeling, leading to different restriction functions. It would be interesting to see if these polynomials still expand positively in the slide basis, or if they lead naturally into the theory of ``basements'' in nonsymmetric polynomials \cite{Alexandersson}.

\item Haglund and Wilson \cite[Section 5.6]{Haglund-Wilson} postulated  that chromatic nonsymmetric polynomials are key-positive.
Unfortunately, we found counterexamples with Dyck graphs on 6 vertices.
That being said, a vast number of instances we considered were key-positive indeed.
This raises the question: Characterize $D$ such that $\p X_{D}(\x_{r};t)$ is key-positive.
One can ask the same question with  other interesting bases for the space of polynomials that are coarser than slide polynomials.

\end{enumerate}

\section*{Acknowledgements}
We are grateful to Sami Assaf, Jim Haglund, and Jongwon Kim for helpful discussions. 
The third author was supported by the National Science Foundation of China (No. 11701424).

\end{document}